\documentclass[10pt]{amsart}
\pdfoutput=1
\theoremstyle{plain}
\usepackage{amsmath, amsfonts, amsthm, amssymb,amscd,bbold}
\usepackage{graphics}
\usepackage{color}
\usepackage{epsfig}
\usepackage{tikz}
\usepackage{tikz-cd}

\usepackage{bbm}
\usepackage{multirow}

\usepackage{enumitem}
\usepackage{chngcntr}
\usepackage{float}
\usepackage[export]{adjustbox}
\usepackage{stackengine}

\usepackage[normalem]{ulem}
\usepackage{tcolorbox}
\usepackage{ stmaryrd }
\usepackage{array}
\usepackage{calc}
\usepackage{caption}
\usepackage{subcaption}
\usepackage[export]{adjustbox}

\usetikzlibrary{arrows}
\usepackage[all]{xy}
\usepackage{todonotes}
\setlength{\textwidth}{5.5in}
\setlength{\oddsidemargin}{0.50in}
\setlength{\evensidemargin}{0.40in}


\theoremstyle{plain}
\newtheorem{theorem}{Theorem}
\newtheorem{thm}{Theorem}
\newtheorem{lemma}{Lemma}

\newtheorem{corollary}{Corollary}
\newtheorem{cor}{Corollary}

\theoremstyle{definition}

\newtheorem{definition}{Definition}

\newtheorem*{remark}{Remark}
\newtheorem{example}{Example}

\newcommand{\FF}{\ensuremath{\mathbb{F}}}

\DeclareMathOperator{\Id}{Id}



\newcommand{\gl}{\mathfrak{gl}}


\DeclareMathOperator{\Mod}{-mod}

\counterwithin{theorem}{section}

\author{Champ Davis}
\address{University of Oregon; Department of Mathematics; 108C University Hall; Eugene, OR 97403}
\email{champd@uoregon.edu}

\title{Restriction of Scalars for $L_\infty$-modules}

\setcounter{tocdepth}{1}

\begin{document}
\bibliographystyle{plain}


\begin{abstract}
Let $I: L' \to L$ be a morphism of $L_\infty$-algebras.  The goal of this paper is to describe  restriction of scalars in the setting of $L_\infty$-modules and prove that it defines a functor $I^*: L\text{-mod} \to L'\text{-mod}$.  A more abstract approach to this problem was recently given by Kraft-Schnitzer.  In a subsequent paper, this result is applied to show that there is a well-defined $L_\infty$-module structure on the sutured annular Khovanov homology of a link in a thickened annulus.  

\end{abstract}

\maketitle

\section{Introduction}\label{sec:Intro}

The study of $L_\infty$-algebras, also known as strong homotopy Lie algebras or sh-Lie algebras, can be traced back to rational homotopy theory and the deformations of algebraic structures, where they first appeared in the form of Lie-Massy operations \cite{A,R, JS2}.  Early applications centered around the Quillen spectral sequence and rational Whitehead products, and there has been continued interest in higher order Whitehead products recently; see \cite{BBM}.  There has also been much interest in $L_\infty$-algebras in physics, where Lie algebras and their representations play a major role.  In particular, $L_\infty$-algebra structures have appeared in work on higher spin particles \cite{BBvD}, as well as in closed string theory \cite{WZ1,Z1}.  Stasheff gives a nice overview in a recent survey article \cite{JS1}.

Attention has also been given to modules over $L_\infty$-algebras.  The notion of an $L_\infty$-module was introduced in \cite{LM1}, in which the correspondence between Lie algebra representations and Lie modules was generalized to the $L_\infty$ setting.  Moreover, homomorphisms between $L_\infty$-modules were developed in \cite{A1}. 

While it is possible to give a complex an $L_\infty$ structure by writing down explicit formulas, another  option is to use homological perturbation theory to transfer an existing $L_\infty$ structure from a different complex.  Information on how to do so can be found in \cite{JH1,H-S,GLS}, where this idea is referred to as the homological perturbation lemma, though sometimes it is referred to as the homotopy transfer theorem, as in \cite{L-V,Man}.  An approach using operads was given in \cite{Berg}, where explicit formulas are written down for the $A_\infty$ case.  Explicit formulas for the $L_\infty$ case can be found in \cite{JMMF}.  

Much of the literature deals with the transfer of $L_\infty$-algebra structures; however, given a map between $L_\infty$-algebras, it is natural to want to use this map to relate their respective categories of modules.  In this paper, we give one explicit formula to do so, giving a proof of the following:

\begin{thm}
	Suppose $L,L'$ are $L_\infty$-algebras over $\FF_2$ and $I: L' \to L$ is a map of $L_\infty$-algebras.  Then there is an induced functor $I^*: L\Mod \to L'\Mod$, called restriction of scalars.
\end{thm}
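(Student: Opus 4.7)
The plan is to construct $I^*$ by giving an explicit formula on objects and morphisms, and then verify the $L_\infty$-module axioms together with functoriality. On objects, given an $L$-module $(M,\{\rho_n\}_{n\ge 1})$ with structure maps $\rho_n\colon L^{\otimes(n-1)}\otimes M\to M$, I will define the induced $L'$-module structure maps $\rho'_n\colon (L')^{\otimes(n-1)}\otimes M\to M$ by a sum-over-partitions formula: to act on $(x'_1,\dots,x'_{n-1};m)$, one partitions the inputs $\{x'_1,\dots,x'_{n-1}\}$ into ordered groups, applies a component $I_k$ of the morphism to each group to obtain an element of $L$, and finally acts on $m$ by a component $\rho_j$. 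Symmetrization is handled by summing over all such ordered partitions (shuffles), and since we work over $\FF_2$ no Koszul signs intervene.

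First I would write this formula precisely and verify the $L_\infty$-module relations for $\{\rho'_n\}$. This is the technical heart of the paper: one expands the relation $\sum \rho'_k\circ(\ell'_{n-k+1}\otimes\id+\cdots)=0$ using the definition of $\rho'$ and shows that the resulting sum of compositions in the $I_n$, $\rho_n$, and brackets of $L$ and $L'$ vanishes. The terms should organize themselves into three families, each vanishing by one of the following: the $L_\infty$-morphism relations defining $I$, the $L_\infty$-algebra relations for $L$, and the $L_\infty$-module relations for $M$ over $L$. The main task is a careful combinatorial matching of these terms, indexed by ordered partitions and shuffles.

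Next I would carry out the analogous construction for morphisms. If $f\colon M\to N$ is a morphism of $L$-modules given by maps $f_n\colon L^{\otimes(n-1)}\otimes M\to N$, the induced morphism $I^*(f)\colon I^*M\to I^*N$ is defined by the same sort of sum-over-partitions in the $f_n$ and $I_n$. Verifying that $I^*(f)$ satisfies the $L_\infty$-module morphism relations uses the morphism relations for $f$, the $L_\infty$-morphism relations for $I$, and the $L_\infty$-algebra relations for $L,L'$, again by combinatorial rearrangement. Functoriality then reduces to two checks: $I^*(\id_M)=\id_{I^*M}$, which is immediate because only the trivial partition contributes and all higher components of $\id_M$ vanish; and $I^*(g\circ f)=I^*(g)\circ I^*(f)$, which follows by comparing two iterated sums over partitions of the same input set and identifying them as refinements of one another.

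The main obstacle will be the combinatorial bookkeeping involved in the two axiom verifications and in the composition law. Working over $\FF_2$ removes the sign subtleties that usually plague such computations, but one still needs clean notation: I plan to index every sum by ordered partitions (or equivalently by planar trees with labeled leaves), so that each term in an expanded identity has a distinguished partner to cancel with or to be absorbed into a structural relation. Setting up this notation carefully at the start, and stating the formulas for $\rho'_n$, $I^*(f)_n$, and compositions as weighted sums over the appropriate set of trees or partitions, should make the axiom checks and functoriality a systematic, if lengthy, matching exercise.
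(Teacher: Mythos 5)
Your proposal follows essentially the same route as the paper: the same sum-over-unshuffles formulas for the induced module structure maps and for $I^*f$, verification of the module and morphism axioms by combinatorial rearrangement of partitions, and the same two functoriality checks (the identity case being trivial because higher components of $\Id_M$ vanish, and compatibility with composition following by merging and re-splitting the iterated sums over partitions). The only small discrepancy is organizational: in the actual verification the terms cancel in two stages rather than three families --- the $L_\infty$-morphism relation for $I$ first converts the $l'$-terms into $l$-terms, and then everything remaining cancels against the $k$-terms via the single $L_\infty$-module relation for $M$, which already packages the bracket contributions, so no separate appeal to the Jacobi identities of $L$ or $L'$ is needed.
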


Given an $L_\infty$-module homomorphism $f: M\to N$, our definition will satisfy $(I^*f)_1=f_1$.  It follows that $I^*$ preserves quasi-isomorphisms; that is, if $M$ and $N$ are quasi-isomorphic, then so too are $I^*M$ and $I^*N$.  We also observe that this generalizes the analagous result in the Lie algebra setting:

\begin{cor}
	If $L$ and $L'$ are Lie algebras, and $\phi: L'\to L$ is a Lie algebra homomorphism, $\phi^*$ is the usual restriction of scalars for Lie algebra representations.
\end{cor}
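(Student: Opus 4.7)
The plan is to unwind every piece of $L_\infty$-theoretic data in the corollary's hypotheses down to the classical Lie-algebraic picture and then check that the explicit formula for $I^*$ constructed in the proof of the theorem collapses to the ordinary restriction-of-scalars action.

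First, I would record how Lie data sit inside the $L_\infty$ world. A Lie algebra $L$ is the $L_\infty$-algebra with $\ell_1=0$, $\ell_2$ the bracket, and $\ell_n=0$ for $n\geq 3$; a Lie algebra homomorphism $\phi: L'\to L$ is the $L_\infty$-morphism $I$ with $I_1=\phi$ and $I_n=0$ for $n\geq 2$; a representation $(M,\rho)$ of $L$ is an $L_\infty$-module with $\ell_2^M: L\otimes M\to M$ equal to the action and $\ell_n^M=0$ for $n\neq 2$; and a Lie module homomorphism $f: M\to N$ is an $L_\infty$-module homomorphism concentrated in arity one, $f_1=f$ and $f_n=0$ for $n\geq 2$.

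Second, I would substitute these specializations into the explicit formula for $I^*$ that was produced in the proof of the theorem. In such formulas the component $(I^*\ell^M)_n: (L')^{\otimes(n-1)}\otimes M\to M$ is a sum indexed by some combinatorial set (trees, shuffles, or level partitions) whose summands are compositions of components $I_{k_j}$ of the morphism with components $\ell_r^M$ of the module bracket. Under our hypotheses every such summand is zero unless each $I_{k_j}$ appearing is $I_1=\phi$ and the module-side slot is $\ell_2^M$. Counting inputs then forces $n=2$, leaving the unique surviving term
\[
(I^*\ell^M)_2(x'\otimes m)=\ell_2^M(\phi(x')\otimes m),
\]
which is exactly the classical prescription $x'\cdot m:=\phi(x')\cdot m$. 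For every $n\neq 2$ the formula yields $(I^*\ell^M)_n=0$, so $\phi^*M$ is a strict $L_\infty$-module, i.e., a genuine Lie module.

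Third, I would check functoriality on morphisms. Since the theorem was designed so that $(I^*f)_1=f_1$, a Lie module homomorphism $f$ (which is concentrated in arity one) is sent to the same underlying linear map, which is exactly what classical restriction of scalars does on morphisms. Combining this with the previous paragraph identifies $\phi^*$ with the classical functor.

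The main obstacle I anticipate is the bookkeeping in the second step: one must verify that every non-trivially-vanishing summand of the general formula collapses to the single composition $\ell_2^M\circ(\phi\otimes\id)$ without extra correction terms, which amounts to a careful reading of the combinatorial indexing set that indexes the sum defining $I^*$. Working over $\FF_2$ removes any worry about signs, so the verification is purely a matter of identifying which trees contribute, and it should be essentially immediate once the formula is in hand.
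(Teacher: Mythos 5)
Your proposal is correct and follows essentially the same route as the paper: encode the Lie data as $L_\infty$ data concentrated in arity one (for $\phi$) and arity two (for the module action), substitute into the formula of Lemma \ref{Lem:Objects}, and observe that every summand vanishes except $k_2\circ(\phi\otimes\Id)$, recovering $x\cdot m=\phi(x)\cdot m$. The paper's proof is just a terser version of this same specialization argument, and your extra remark that $(I^*f)_1=f_1$ handles morphisms is consistent with what the paper already notes after the main theorem.
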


One reason for interest in this result is a particular application in knot theory.  In \cite{GLW}, it was shown that the sutured annular Khovanov homology of a knot can be given the structure of a Lie algebra representation.  In subsequent work, we will use the formulas from this paper to upgrade this structure to an $L_\infty$-module.

Because $L_\infty$ modules are defined in the graded setting, keeping track of signs requires a great deal of care.  We will ignore signs and work over $\FF_2$.  As mentioned in \cite{A1}, $A_\infty$-modules and maps between them can be reinterpreted in terms of differential comodules.  The analagous reformulation in the $L_\infty$ case is less-understood, but perhaps could facilitate the recording of signs.  Moreover, while this paper was in preparation, Kraft-Schnitzer gave a more abstract approach to the restriction of scalars operation in \cite{KS}.  We present an alternative interpretation, and we emphasize that the explicit formulas developed here are of particular interest for our applications.  On the other hand, \cite{KS} might serve as a guide for how to deal with signs in the future.

The outline of the paper is as follows.   In section 2, we review the definition of an $L_\infty$-algebra and explain morphisms between them.  In section 3, we provide a similar exposition for $L_\infty$-modules, and we also describe how to compose morphisms between $L_\infty$-modules.  In section 4, we describe $I^*$, the restriction of scalars functor.  We define $I^*$ on objects and morphisms, and then we prove that it is functorial.  The appendix includes supplementary graphics for the proofs presented in the paper, which contain somewhat complicated formulas.

\section{$L_\infty$-algebras and $L_\infty$-modules}

	In this section, we review $L_\infty$-algebras and explain morphisms between them.  We start by introducting some notation that we will use throughout the paper.

	\begin{definition}
	 	Let $\sigma \in S^n$ and $x_i$ be elements of a set $X$.  Define the map $\sigma^{\bullet}: X^n \to X^n$ by $\sigma^{\bullet}(x_1, x_2, \ldots, x_n) = (x_{\sigma(1)}, \ldots, x_{\sigma(n)})$.  Note that this induces a map on the $n$-fold tensor product $\sigma^{\bullet}: X^{\otimes n} \to X^{\otimes n}$ that sends an element $x_1 \otimes \cdots \otimes x_n$ to $x_{\sigma(1)} \otimes \cdots \otimes x_{\sigma(n)}$.
	 \end{definition} 

	\begin{definition}
	 	Fix non-negative integers $i_1, i_2, \ldots, i_n$, with $i_1 + i_2 + \cdots + i_r = n$.  A permutation $\sigma\in S_n$ is an \textbf{$(i_1, i_2, \ldots, i_r)$-unshuffle} if 
	 	\begin{align*}
	 		&\sigma(1) < \cdots < \sigma(i_1) \\ 
	 		&\sigma(i_1+1) < \cdots < \sigma(i_1+i_2) \\ 
	 		&\quad \vdots \\ 
	 		&\sigma(i_1+\cdots+i_{r-1}+1) < \cdots < \sigma(i_1+\cdots+i_r)
	 	\end{align*}
	 	We will denote the set of $(i_1, i_2, \ldots, i_r)$-unshuffles in $S_n$ by $S(i_1, \ldots, i_r)$.  We will also denote by $S'(i_1, \ldots, i_r)$ the set of $(i_1, i_2, \ldots, i_r)$-unshuffles $\sigma$ in $S_n$ satisfying $i_1 \leq i_2 \leq \cdots \leq i_r$ and $\sigma(i_1 + \cdots + i_{l-1} + 1) < \sigma(i_1 + \cdots + i_l+1)$ if $i_l = i_{l+1}$.  This second condition on $\sigma$ says that the order is preserved when comparing the first elements of blocks of the same size.  Indeed, if $\sigma$ is a $(1,2,2,3)$-unshuffle in $S'_8$, then $i_2 = i_3 = 2$, so the order must be preserved when comparing the first element of the $i_2$ block to the first element of the $i_3$ block.
  
	 \end{definition}

	 \begin{example} Figure \ref{fig:A} is an example of a $(1,1,2,3)$-unshuffle in $S_7$.  That is, $\sigma=(124653)(7)$, and we have drawn a picture describing $\sigma^{\bullet}$.  That is, $x_{\sigma(1)} = x_2$, $x_{\sigma(2)}=x_4$, and so on.  The picture describes how $\sigma^\bullet$ permutes $x_1, \ldots, x_7$.
	 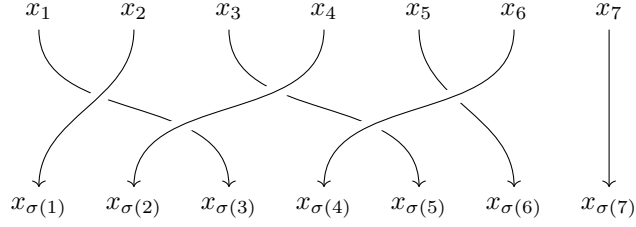
\begin{figure}[h!]
	 	\centering
		\[
		\begin{tikzcd}[column sep=tiny, row sep=6em]
		x_1 \arrow[drr, out=-90, in=90] & x_2 \arrow[dl, out=-90, in=90, crossing over] & x_3 \arrow[drr, out=-90, in=90] & x_4 \arrow[dll, out=-90, in=90, crossing over] & x_5 \arrow[dr, out=-90, in=90] & x_6 \arrow[dll, out=-90, in=90, crossing over] & x_7 \arrow[d] \\ 
		x_{\sigma(1)} & x_{\sigma(2)} & x_{\sigma(3)} & x_{\sigma(4)} & x_{\sigma(5)} & x_{\sigma(6)} & x_{\sigma(7)}
		\end{tikzcd}
		\]
		\caption{A depiction of an $(1,1,2,3)$-unshuffle in $S_7$.  Here $\sigma=(124653)(7)$, and $\sigma^\bullet(x_1, x_2,x_3,x_4,x_5,x_6, x_7) = (x_2,x_4,x_1,x_6,x_3,x_5,x_7)$.}
		\label{fig:A}
		 \end{figure}

	\vspace{1em}

	\noindent In words, a $(1,1,2,3)$-unshuffle places the numbers 1 through 7 into boxes of size 1,1,2, and 3, where the order is preserved in each box.  In this example, the resulting boxes would be $(2), (4), (1,6),$ and $(3,5,7)$.
	\end{example}

	\begin{example}
		A special case of the above definition is if we only have two numbers in our partition of $n$.  In particular, $\sigma\in S_n$ is a \textbf{$(p,n-p)$-unshuffle} if $\sigma(k)<\sigma(k+1)$ whenever $k\neq p$.  In words, this permutation will place the numbers 1 through $n$ into two boxes, where order is preserved in each.  For brevity, we will sometimes refer to a $(p, n-p)$-unshuffle as a $p$-unshuffle if $n$ is clear.
	\end{example}



	\begin{example} In $S_4$, if we use the notation $xyzw$ to denote the permutation
		$\left(\begin{smallmatrix}
			1 & 2 & 3 & 4 \\ 
			x & y & z & w
		\end{smallmatrix}\right)$, 
		then we can write down the 1, 2, and 3-unshuffles:

	$$
	\begin{tabular}{ll}
		1-unshuffles: & 1234, 2134, 3124, 4123 \\ 
		2-unshuffles: & 1234, 1324, 1423, 2314, 2413, 3412 \\ 
		3-unshuffles: & 1234, 1243, 1342, 2341 \\ 
	\end{tabular}
	$$

	\end{example}

We can now state the definition of an $L_\infty$-algebra.  The general definition involves signs, but we are working over $\FF_2$ throughout this paper.

\begin{definition}
	Let $V$ be a graded vector space.  An $L_\infty$-algebra structure on $V$ is a collection of (skew)-symmetric multilinear maps $\{l_k: V^{\otimes k} \to V\}$ of degree $k-2$.  That is,
	$$l_k(x_{\sigma(1)}, x_{\sigma(2)}, \ldots, x_{\sigma(k)}) = l_k(x_1, x_2, \ldots, x_k)$$
	for all $\sigma\in S_k$ and $x_i\in V$.  Moreover, these maps must satisfy the generalized Jacobi identity:
	$$\sum_{i+j=n+1}\sum_{\sigma \in S(i,n-i)} l_j(l_i(x_{\sigma(1)}, \ldots, x_{\sigma(i)}), x_{\sigma(i+1)}, \ldots, x_{\sigma(n)}) = 0$$
	Here, $i\geq 1$, $j\geq 1$, $n\geq 1$, and the inner summation is taken over all $(i,n-i)$-unshuffles.
\end{definition}
\begin{remark}
	We could have also written the (skew)-symmetry condition as $l_k\circ \sigma^{\bullet} = l_k$ for $\sigma \in S_k$.
\end{remark}
\begin{remark}
	Another way to write the generalized Jacobi indentity is by using the notation 
	$$\sum_{i+j=n+1}\sum_{\sigma} l_j \circ (l_i \otimes \Id) \circ \sigma^{\bullet} = 0$$
\end{remark}

\begin{remark} Figure \ref{fig:B} is a depiction of the generalized Jacobi identity.
	\begin{figure}[H]
		\centering
		\scalebox{1}{\input{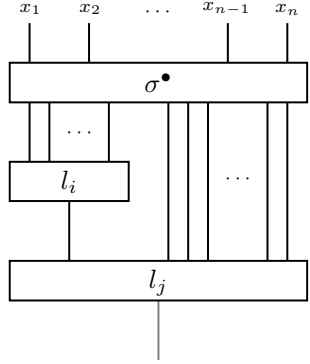}}
    	\caption{A graphical depiction of the generalized Jacobi identity.  This should be interpreted as the sum of all compositions $l_j\circ (l_i \otimes \Id) \circ \sigma^\bullet$, applied to the input $x_1\otimes \cdots \otimes x_n$.  That is, this picture represents $\sum_{i+j=n+1}\sum_{\sigma} l_j \circ (l_i \otimes \Id) \circ \sigma^{\bullet}(x_1\otimes \cdots \otimes x_n) = 0$.} 
    	\label{fig:B}
	\end{figure}
\end{remark}

\begin{remark}
	This definition assumes that our $L_\infty$-algebra is a chain complex.  If instead it is a cochain complex, we require each $l_k$ to have degree $2-k$.
\end{remark}



	

	\begin{definition}
		Let $(L,l_i)$ and $(L', l_i')$ be $L_\infty$ algebras.  An $L_\infty$-algebra homomorphism from $L$ to $L'$ is a collection $\{f_n: L^{\otimes n} \to L'\}$ of (skew)-symmetric multilinear maps of degree $n-1$ such that 
		\begin{align*}
			\sum_{j+k = n+1} \sum_{\sigma\in S(k,n-k)} f_j \circ (l_k \otimes \Id) \circ \sigma^{\bullet}  = \sum_{\substack{\tau \in S'(i_1,\ldots,i_r) \\  i_1+\ldots+i_r=n}} l_r' \circ (f_{i_1} \otimes \cdots \otimes f_{i_r})\circ \tau^{\bullet}
		\end{align*}

	\end{definition}

	\begin{example}  The $n=2$ morphism relation says that
		\begin{equation*}
		\begin{split}
			f_1(l_2(x_1,x_2))+f_2(l_1(x_1),x_2)+f_2(l_1(x_2),x_1) 
			&= l_1'(f_2(x_1,x_2))+l_2'(f_1(x_1),f_1(x_2))
					\end{split}
		\end{equation*}
		When $(L, l_i)$ and $(L',l_i')$ are $L_\infty$-algebras consisting of elements in degree 0 only, the $n=2$ morphism relation simplifies to
		$f_1(l_2(x_1,x_2)) + l_2'(f_1(x_1),f_1(x_2)) = 0$,
		which is just a Lie algebra homomorphism:
		$\phi([x_1, x_2]) = [\phi(x_1),\phi(x_2)]$.
		
	\end{example}

\begin{definition}
	Let $(L,l_k)$ be an $L_\infty$-algebra.  An \textbf{$L_\infty$-module} over $L$ is a graded vector space $M$, together with a collection of skew-symmetric multilinear maps $\{k_n: L^{\otimes n-1} \otimes M \to M \mid 1\leq n<\infty\}$ of degree $n-2$ such that the following identity holds:
	\begin{align*}
			&\sum_{\substack{p+q=n+1 \\ p<n}}\sum_{\sigma(n)=n} k_q\circ (l_p \otimes \Id) \circ \sigma^{\bullet}  +\sum_{p+q=n+1} \sum_{\sigma(p)=n}  k_q\circ \delta^{\bullet} \circ ( k_p\otimes \Id) \circ \sigma^{\bullet} = 0
		\end{align*}
		 Here, $\sigma$ is a $p$-unshuffle in $S_n$.  Also, because $k_n: L^{\otimes n-1}\otimes M \to M$, we introduce the permutation $\delta$ and use skew-symmetry of $k_n$ in the case $\sigma(p)=n$.  That is, $\delta^{\bullet}$ is the map that permutes the $k_p$ term past the remaining elements:
		 \begin{align*}
		 k_q\big( \underbrace{k_p(x_{\sigma(1)}, \ldots, x_{\sigma(p)})}_{\in M}, x_{\sigma(p+1)}, \ldots, x_{\sigma(n)} \big) &= k_q\Big( \delta^{\bullet} \circ \big( \underbrace{k_p(x_{\sigma(1)}, \ldots, x_{\sigma(p)})}_{\in M}, x_{\sigma(p+1)}, \ldots, x_{\sigma(n)} \big) \Big) \\
		 &=k_q\big( x_{\sigma(p+1)}, \ldots, x_{\sigma(n)}, \underbrace{k_p(x_{\sigma(1)}, \ldots, x_{\sigma(p)})}_{\in M} \big)
		 \end{align*}
\end{definition}

	\begin{example}
		The $n=1$ module relation says that $M$ is a chain complex with differential $k_1$:
		$$k_1(k_1(m)) = 0$$
		The $n=2$ module relation says that the action satisfies the Leibniz rule:
		$$k_2(l_1(x_1),m) + k_2(x_1,k_1(m)) + k_1(k_2(x_1,m)) = 0$$
		Using a different notation, we could also write
		$$[\partial x_1, m] + [x_1,\partial m] + \partial[x_1,m] = 0 $$
		to remind us of differential graded Lie algebras.
	\end{example}




\begin{definition}
	Following \cite{A1}, let $(L, l_i)$ be an $L_\infty$-algebra, and let $(M,k_i)$ and $(M',k_i')$ be two $L_\infty$-modules over $L$.  An \textbf{$L_\infty$-module homomorphism} from $M$ to $M'$ is a collection $\{h_n: L^{\otimes{(n-1)}} \otimes M \to M'\}$ of skew-symmetric multilinear maps of degree $n-1$ such that 
	\begin{align*}
		&\sum_{\substack{i+j=n+1 \\ i<n}} \sum_{\sigma(n)=n} h_j \circ (l_i \otimes \Id) \circ \sigma^{\bullet} \\
		+&\sum_{i+j=n+1} \sum_{\sigma(i)=n} h_j \circ \delta^{\bullet} \circ (k_i \otimes \Id) \circ \sigma^{\bullet} \\ 
	 =& \sum_{r+s=n+1} \sum_{\tau} k_r'\circ (\Id\otimes h_s) \circ (\tau^{\bullet} \otimes \Id)
	\end{align*}
	Here, $\sigma$ is an $i$-unshuffle in $S_n$, and $\tau$ is an $(n-s)$-unshuffle in $S_{n-1}$.

\end{definition}

\begin{remark} Figure \ref{fig:C} is a depiction of the $L_\infty$-module homomorphism relation.  

	\begin{figure}[H]
		\centering
		\scalebox{.9}{\input{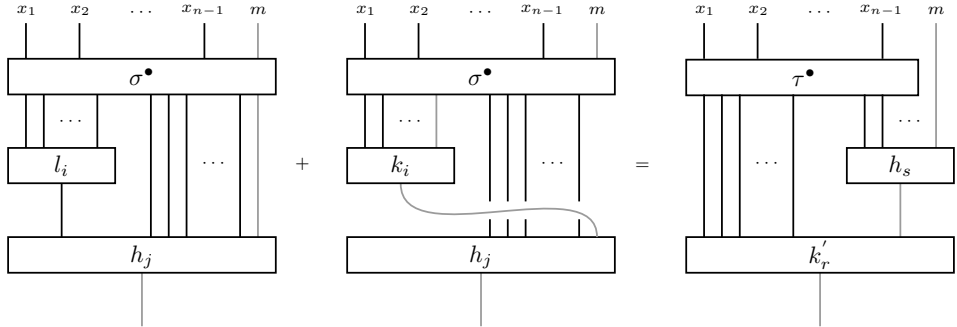}}
    	\caption{A graphical depiction of the $L_\infty$-module homomorphism relation.  This should be interpreted as $\sum h_j \circ (l_i \otimes \Id) \circ \sigma^{\bullet}+\sum h_j \circ \delta^{\bullet} \circ (k_i \otimes \Id) \circ \sigma^{\bullet}
	 = \sum k_r'\circ (\Id\otimes h_s) \circ (\tau^{\bullet} \otimes \Id)$.}
	 \label{fig:C}
	\end{figure}
\end{remark}

\begin{example}
		The $n=1$ module homomorphism relation says that $h_1$ is a chain map:
		$h_1k_1(m) = k_1'h_1(m)$. The $n=2$ module homomorphism relation says:
		$$h_2(l_1(x_1),m) + h_2(x_1,k_1(m)) + h_1(k_2(x_1,m)) = k_2'(x_1,h_1(m)) + k_1'(h_2(x_1,m))$$
	\end{example}



\begin{definition}
	The \textbf{identity map}, $\Id_M$, of an $L_\infty$ module $M$ is defined as follows. $(\Id_M)_1$ is the identity map of the underlying graded vector space $M$, and $(\Id_M)_r = 0$ for $r\geq 2$.  It is straightforward to check that this satisfies the definition of an $L_\infty$-module homomorphism.
\end{definition}

\begin{definition}
	Let $L$ be an $L_\infty$-algebra, and let $A,B,$ and $C$ be $L_\infty$-modules over $L$.  Given $L_\infty$-module homomorphisms $A \xrightarrow{f} B \xrightarrow{g} C$, we define the composition $g\circ f$ by
	$$(g\circ f)_n = \sum_{i+j=n+1} \sum_{\sigma(i)=n} g_j \circ \delta^{\bullet} \circ (f_i \otimes \Id) \circ \sigma^{\bullet}$$
	where $\sigma$ is an $i$-unshuffle in $S_n$, and $\lambda^{\bullet}$ is the map that permutes the module element to the final input.
\end{definition}

\begin{figure}[H]
		\centering
		\scalebox{1}{\input{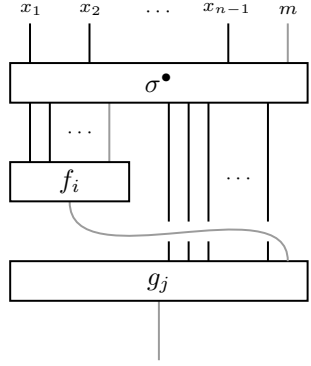}}
    	\caption{A graphical depiction of the composition of two $L_\infty$-module homomorphisms.  This should be interpreted as $(g\circ f)_n = \sum g_j \circ \delta^{\bullet} \circ (f_i \otimes \Id) \circ \sigma^{\bullet}$.} 
	\end{figure}

The following Lemma is perhaps well-known, but we do not know a reference for it.  Pictures representing each step in the proof are given in the appendix.

\begin{lemma}[Composition]
	Let $(L,l_i)$ be an $L_\infty$-algebra, and let $A,B,$ and $C$ be $L_\infty$-modules over $L$, with module operations denoted by $a_i, b_i$, and $c_i$, respectively.  Given $L_\infty$-module homomorphisms $A \xrightarrow{f} B \xrightarrow{g} C$, the composition $g\circ f$ is an $L_\infty$-module homomorphism.
\end{lemma}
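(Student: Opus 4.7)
Write $h := g\circ f$. By definition,
\[
h_n = \sum_{i+j=n+1}\sum_{\sigma(i)=n} g_j\circ \delta^{\bullet}\circ (f_i\otimes \Id)\circ \sigma^{\bullet},
\]
and we must verify the $L_\infty$-module homomorphism relation for $h$ with respect to the modules $A$ (action $a_i$) and $C$ (action $c_i$); that is,
\[
\sum_{\substack{i+j=n+1\\ i<n}}\sum_{\sigma(n)=n} h_j\circ(l_i\otimes\Id)\circ\sigma^{\bullet}
+ \sum_{i+j=n+1}\sum_{\sigma(i)=n} h_j\circ\delta^{\bullet}\circ(a_i\otimes\Id)\circ\sigma^{\bullet}
= \sum_{r+s=n+1}\sum_{\tau} c_r\circ(\Id\otimes h_s)\circ(\tau^{\bullet}\otimes\Id).
\]
The plan is to substitute the definition of $h$ into all three sums, producing triple-nested expressions of the form $g_p\circ\delta^{\bullet}\circ(f_q\otimes\Id)\circ(\cdots)$ with an $l_i$, $a_i$, or $c_r$ inserted. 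I will then use the module homomorphism relations for $f$ and $g$ to show that everything matches, with the key cancellations occurring modulo $2$.

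The argument proceeds in three steps. First, expand the two sums on the left by substituting $h_j$. Each resulting term has an $l_i$ or an $a_i$ either acting on the inputs of the inner $f$-block, on the inputs of the outer $g$-block, or directly on the module input. Second, apply the $L_\infty$-module homomorphism relation for $f$ (with respect to the action $a_i$ on $A$ and $b_i$ on $B$) to rewrite each of those terms in which $l_i$ or $a_i$ feeds into an $f$-block. The relation replaces such a configuration by terms in which $b_r$ acts on outputs of one or more $f$-blocks; all of these are then composed on the outside with some $g_j$. Third, expand the right-hand side by substituting $h_s$ into $c_r\circ(\Id\otimes h_s)\circ(\tau^{\bullet}\otimes\Id)$, and apply the $L_\infty$-module homomorphism relation for $g$ (with actions $b_i$ and $c_i$) to the resulting $c_r\circ(\Id\otimes g_{j'})$ subexpressions. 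After this step, both sides have been rewritten as sums of terms of the shape
\[
g_p\circ\delta^{\bullet}\circ(b_q\otimes\Id)\circ(\text{unshuffle})\circ(f_{i_1}\otimes\cdots\otimes f_{i_k}\otimes\Id),
\]
together with terms where the $b_q$ has been replaced by $l$ or acts trivially. A term-by-term matching then shows that every configuration appears with the same multiplicity on both sides, so the two sides agree. The intermediate configurations involving $b$-actions, which are produced twice (once from applying $f$'s relation on the left, once from applying $g$'s relation on the right), cancel pairwise in $\mathbb{F}_2$.

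The main obstacle is the combinatorial bookkeeping of unshuffles. When $h_j$ is expanded inside the left-hand side, the outer unshuffle $\sigma$ (which places an $l_i$ or $a_i$) must be composed with the inner unshuffle from the definition of $h_j$ (which selects the inputs to the inner $f$); I plan to reindex these composite permutations as a single unshuffle on $n$ letters, refined by a block structure that records which inputs feed into $f$ and which into $g$. The same reindexing is needed on the right, where the outer shuffle $\tau$ of the $g$-relation must be composed with the internal shuffle of $h_s$. Once this reindexing is fixed, each side becomes a sum over triples of unshuffles of the same combinatorial type, and the matching of terms (and cancellation of intermediate $b$-terms) is mechanical. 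The symmetry of the $f_i$ and $g_j$ in their $L$-inputs, together with the fact that each $h_j$ is itself already symmetric (by construction), is what ensures that the rewritten sums on both sides are indexed by the same sets of unshuffles, up to a permissible relabeling of blocks.
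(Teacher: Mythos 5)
Your plan is correct and follows essentially the same route as the paper: expand $(g\circ f)$ on both sides, apply the module homomorphism relation for $f$ to the terms where $a_i$ or $l_i$ feeds the inner $f$-block (carrying along the terms where $l_i$ feeds the outer $g$-block), then apply the relation for $g$ to the resulting $b$-terms, with the composite unshuffles reorganized into single unshuffles with block structure exactly as in the paper's technical Lemma~\ref{UL}. Working from both ends toward the common $b$-level expression rather than transforming the left side all the way into the right is only a cosmetic reorganization of the same chain of identities.
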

\begin{proof} This follows from the fact that both $f$ and $g$ are $L_\infty$-module homomorphisms.  Below, we will apply the $L_\infty$-module homomorphism relation for $f$, then we will apply the $L_\infty$-module homomorphism relation for $g$, and then we will conclude the $L_\infty$-module homomorphism relation for $g\circ f$.

\vspace{1em}
\noindent \textbf{Step 1.} The relation that we need to show is 

	$$\sum_{i+j=n+1} \sum_{\sigma} (g\circ f)_j \circ (a_i \otimes \Id) \circ \sigma^\bullet = \sum_{r+s=n+1} \sum_{\tau} c_r \circ (\Id \otimes (g\circ f)_s) \circ \tau^\bullet$$
	where $\sigma$ is an $(i,n-i)$-unshuffle and $\tau$ is an $(n-s,s-1)$-unshuffle.

\vspace{1em}
\noindent \textbf{Step 2.} Break the left-hand side into two parts, and replace $(g\circ f)_j$ with its definition
	\begin{align*}
		&\sum_{i+j=n+1} \sum_{\sigma(i)=n} \sum_{p+q=j+1} \sum_{\theta(p)=j} g_q \circ \delta^\bullet \circ (f_p \otimes \Id) \circ \theta^\bullet \circ \lambda^\bullet \circ (a_i \otimes \Id) \circ \sigma^\bullet \\
		+ &\sum_{\substack{i+j=n+1 \\ i<n}} \sum_{\substack{\sigma(n)=n}} \sum_{p+q=j+1} \sum_{\theta(p)=j} g_q \circ \delta^\bullet \circ (f_p \otimes \Id) \circ \theta^\bullet \circ (l_i \otimes \Id) \circ \sigma^\bullet
	\end{align*}
	where $\delta^\bullet$ is the map that permutes the module element to the last input.

\vspace{1em}
\noindent \textbf{Step 3.} In the first sum, applying $\sigma^\bullet$ and $\theta^\bullet$ results in a block of size $i$ being inputted to $a_i$, a block of size $p-1$ being inputted into $f_p$, together with the output of $a_i$, and then a block of size $j-p$ remaining elements (which will be inputted into $g_q$).  An equivalent way to achieve this is to first apply a $(p+i-1)$-unshuffle $\eta^\bullet$ and then an $i$-unshuffle $\psi^\bullet$.  If $\eta(p+i-1)=n$ and $\psi(i)=p+i-1$, we again obtain a block of size $i$ being inputted into $a_i$, then a block of size $p-1$ being inputted into $f_p$, together with the output of $a_i$, with $j-p$ elements remaining.  

In the second sum, we do the same thing, except the output of $l_i$ can either go into the first input of $f_p$ or the first input of $g_q$, by the definition of unshuffle.  So we decompose the second sum to reflect these two cases.

	\begin{align*}
		&\sum_{i+j=n+1} \sum_{p+q=j+1} \sum_{\substack{\eta\in S(p+i-1,j-p) \\ \eta(p+i-1)=n}}  \sum_{\substack{\psi \in S(i,p-1) \\ \psi(i)=p+i-1}} g_q \circ \delta^\bullet \circ (f_p \otimes \Id) \circ \lambda^\bullet \circ (a_i \otimes \Id) \circ (\psi^\bullet\otimes \Id)\circ \eta^\bullet \\
		+ &\sum_{\substack{i+j=n+1 \\ i<n}} \sum_{\substack{p+q=j+1 \\ p>1}} \sum_{\substack{\eta\in S(p+i-1,j-p) \\ \eta(p+i-1)=n}}  \sum_{\substack{\psi \in S(i,p-1) \\ \psi(i)=i}} g_q \circ \delta^\bullet \circ (f_p \otimes \Id)  \circ (l_i \otimes \Id) \circ (\psi^\bullet\otimes \Id)\circ \eta^\bullet \\
		+ &\sum_{\substack{i+j=n+1 \\ i<n}} \sum_{p+q=j+1} \sum_{\substack{\eta\in S(p+i,j-p-1) \\ \eta(p+i)=n}}  \sum_{\substack{\psi \in S(i,p-1) \\ \psi(i)=i}} g_q \circ \delta^\bullet \circ (l_i \otimes f_p \otimes \Id) \circ (\psi^\bullet\otimes \Id)\circ \eta^\bullet \\
	\end{align*}

\vspace{1em}
\noindent \textbf{Step 4.} Reindex over $\alpha = p+i$.
	\begin{align*}
		&\sum_{\alpha=2}^{n+1} \sum_{p+i=\alpha} \sum_{\substack{\eta\in S(\alpha-1,n-\alpha+1) \\ \eta(\alpha-1)=n}}  \sum_{\substack{\psi \in S(i,\alpha-1-i) \\ \psi(i)=\alpha-1}} g_{n+2-\alpha} \circ \delta^\bullet \circ (f_p \otimes \Id) \circ \lambda^\bullet \circ (a_i \otimes \Id) \circ (\psi^\bullet\otimes \Id)\circ \eta^\bullet \\
		+ &\sum_{\alpha=2}^{n+1} \sum_{\substack{p+i=\alpha \\ 1<p, i<n}} \sum_{\substack{\eta\in S(\alpha-1,n-\alpha+1) \\ \eta(\alpha-1)=n}}  \sum_{\substack{\psi \in S(i,\alpha-1-i) \\ \psi(i)=i}} g_{n+2-\alpha} \circ \delta^\bullet \circ (f_p \otimes \Id)  \circ (l_i \otimes \Id) \circ (\psi^\bullet\otimes \Id)\circ \eta^\bullet \\
		+ &\sum_{\alpha=2}^{n+1} \sum_{\substack{p+i=\alpha \\ i<n}} \sum_{\substack{\eta\in S(\alpha,n-\alpha) \\ \eta(\alpha)=n}}  \sum_{\substack{\psi \in S(i,p-1) \\ \psi(i)=i}} g_{n+2-\alpha} \circ \delta^\bullet \circ (l_i \otimes f_p \otimes \Id) \circ (\psi^\bullet\otimes \Id)\circ \eta^\bullet \\
	\end{align*}

\vspace{1em}
\noindent \textbf{Step 5.} Apply the module homomorphism relation for $f$ in the first two sums.  In the third sum, change notation from $i$ to $t$ and from $p$ to $s$.

	\begin{align*}
		&\sum_{\alpha=2}^{n+1} \sum_{t+s=\alpha} \sum_{\substack{\eta\in S(\alpha-1,n-\alpha+1) \\ \eta(\alpha-1)=n}}  \sum_{\substack{\tau \in S(t-1,s-1)}} g_{n+2-\alpha} \circ \delta^\bullet \circ (b_t \otimes \Id) \circ (\Id \otimes f_s \otimes \Id) \circ (\tau^\bullet\otimes \Id)\circ \eta^\bullet \\
		+ &\sum_{\alpha=2}^{n+1} \sum_{\substack{t+s=\alpha \\ t<n}} \sum_{\substack{\eta\in S(\alpha,n-\alpha) \\ \eta(\alpha)=n}}  \sum_{\substack{\psi \in S(t,s-1) \\ \psi(t)=t}} g_{n+2-\alpha} \circ \delta^\bullet \circ (l_t \otimes f_s \otimes \Id) \circ (\psi^\bullet\otimes \Id)\circ \eta^\bullet \\
	\end{align*}

\vspace{1em}
\noindent \textbf{Step 6.} In the first sum, combine $\tau \in S(t-1,s-1)$ and $\eta \in S(\alpha-1, n-\alpha+1)$ into a single $(t-1,s,n-\alpha+1)$-unshuffle, denoted by $\pi$.  In the second sum, combine $\psi \in S(t,s-1)$ and $\eta\in S(\alpha,n-\alpha)$ into a single $(t,s,n-\alpha)$-unshuffle, denoted by $\pi$.

	\begin{align*}
		&\sum_{\alpha=2}^{n+1} \sum_{t+s=\alpha} \sum_{\substack{\pi\in S(t-1, s,n-\alpha+1) \\ \pi(\alpha-1)=n}} g_{n+2-\alpha} \circ \delta^\bullet \circ (b_t \otimes \Id) \circ (\Id \otimes f_s \otimes \Id) \circ \pi^\bullet \\
		+ &\sum_{\alpha=2}^{n+1} \sum_{\substack{t+s=\alpha \\ t<n}}  \sum_{\substack{\pi \in S(t,s,n-\alpha) \\ \pi(\alpha)=n}} g_{n+2-\alpha} \circ \delta^\bullet \circ (l_i \otimes f_s \otimes \Id) \circ \pi^\bullet \\
	\end{align*}

\vspace{1em}
\noindent \textbf{Step 7.} In the first sum, $\pi$ unshuffles the $n$ elements into a block of size $t-1$, a block of size $s$, and a block of size $n-\alpha+1$.  The block of size $s$ is then inputted into $f_s$, and then the output of $f_s$ is then inputted into $b_t$, as the module element, with the block of size $t-1$.    

An equivalent way of achieving this is to apply an $(n-s, s-1)$-unshuffle to the $(n-1)$-algebra elements, to form blocks of size $(n-s)$ and $s-1$, and then input the $s-1$ algebra elements into $f_s$, with the module element.  Then, apply an $t$-unshuffle $\sigma^\bullet$ to these $n-s+1$ elements.  By requiring $\sigma(t)=n-s+1$, we obtain a block of size $t-1$, plus a module element, that we input into $b_t$.   We can do an analagous reformulation of the second sum.

\begin{align*}
		&\sum_{\alpha=2}^{n+1} \sum_{t+s=\alpha} \sum_{\substack{\phi \in S(n-s, s-1)}} \sum_{\substack{\sigma \in S(t, n-s+1) \\ \sigma(t)=n-s+1}} g_{n+2-\alpha} \circ \delta^\bullet \circ (b_t \otimes \Id) \circ \sigma^\bullet \circ (\Id \otimes f_s) \circ (\phi^\bullet \otimes \Id) \\
		+ &\sum_{\alpha=2}^{n+1} \sum_{\substack{t+s=\alpha \\ t<n}} \sum_{\substack{\phi \in S(n-s, s-1)}} \sum_{\substack{\sigma \in S(t, n-s+1) \\ \sigma(n-s+1)=n-s+1}} g_{n+2-\alpha} \circ (l_t \otimes \Id) \circ \sigma^\bullet \circ (\Id \otimes f_s) \circ (\phi^\bullet \otimes \Id) \\
	\end{align*}

\vspace{1em}
\noindent \textbf{Step 8.} Reindex, noting that $\sum_{\alpha=2}^{n+1} \sum_{t+s=\alpha} = \sum_{s=1}^{n} \sum_{t=1}^{n+1-s} = \sum_{s=1}^{n} \sum_{x+y=n+2-s}$.

\begin{align*}
		&\sum_{s=1}^{n} \sum_{x+y=n+2-s} \sum_{\substack{\phi \in S(n-s, s-1)}} \sum_{\substack{\sigma \in S(x, n-s+1) \\ \sigma(x)=n-s+1}} g_{y} \circ \delta^\bullet \circ (b_x \otimes \Id) \circ \sigma^\bullet \circ (\Id \otimes f_s) \circ (\phi^\bullet \otimes \Id) \\
		+ &\sum_{s=1}^{n} \sum_{\substack{x+y=n+2-s \\ x < n}} \sum_{\substack{\phi \in S(n-s, s-1)}} \sum_{\substack{\sigma \in S(x, n-s+1) \\ \sigma(n-s+1)=n-s+1}} g_{y} \circ (l_x \otimes \Id) \circ \sigma^\bullet \circ (\Id \otimes f_s) \circ (\phi^\bullet \otimes \Id) \\
	\end{align*}

\vspace{1em}
\noindent \textbf{Step 9.} Apply the morphism relation for $g$.
\begin{align*}
	\sum_{s=1}^{n} \sum_{r+q=n-s+2} \sum_{\substack{\phi \in S(n-s,s-1)}} \sum_{\kappa \in S(r-1,q-1)} c_r \circ (\Id \otimes g_q) \circ  (\kappa^\bullet \otimes \Id) \circ (\Id \otimes f_s)  \circ (\phi^\bullet \otimes \Id)
\end{align*}

\vspace{1em}
\noindent \textbf{Step 10.} Combine $\kappa$ and $\phi$ into a single permutation $\pi$.

\begin{align*}
	\sum_{s=1}^{n} \sum_{r+q=n-s+2} \sum_{\substack{\pi \in S(r-1,q-1,s-1)}} c_r \circ (\Id \otimes g_q) \circ (\Id \otimes f_s)  \circ (\pi^\bullet \otimes \Id)
\end{align*}

\vspace{1em}
\noindent \textbf{Step 11.} Split $\pi$ into $\tau$ and $\psi$.  The map $\lambda^\bullet$ is needed to permute the module element into the last input of $g_q$.

\begin{align*}
	\sum_{s=1}^{n} \sum_{r+q=n-s+2} \sum_{\substack{\tau \in S(r-1,n-r)}} \sum_{\substack{\psi \in S(s,q-1)}} c_r \circ \big(\Id \otimes \big[g_q \circ \lambda^\bullet \circ (f_s \otimes \Id) \circ \psi^\bullet \big] \big) \circ (\tau^\bullet \otimes \Id)
\end{align*}

\vspace{1em}
\noindent \textbf{Step 12.} Change how we index over $s,r,q$.

\begin{align*}
	\sum_{r=1}^{n} \sum_{s+q=n+2-r} \sum_{\substack{\tau \in S(r-1,n-r)}} \sum_{\substack{\psi \in S(s,q-1)}} c_r \circ \big(\Id \otimes \big[g_q \circ \lambda^\bullet \circ (f_s \otimes \Id) \circ \psi^\bullet \big] \big) \circ (\tau^\bullet \otimes \Id)
\end{align*}

\vspace{1em}
\noindent \textbf{Step 13.} Use the definition of $g\circ f$.

\begin{align*}
	\sum_{r=1}^{n} \sum_{\substack{\tau \in S(r-1,n-r)}} c_r \circ (\Id \otimes (g\circ f)_{n+1-r}) \circ (\tau^\bullet \otimes \Id)
\end{align*}

\vspace{1em}
\noindent \textbf{Step 14.} This is

$$\sum_{r+s=n+1} \sum_{\tau \in S(r-1,s-1)} c_r \circ (\Id \otimes (g\circ f)_s) \circ (\tau^\bullet \otimes \Id)$$

\end{proof}

\section{Restriction of Scalars}

In this section, we prove the main result.  We start by defining the restriction of scalars functor on objects and prove that the result is an $L_\infty$-module.  We then define the restriction of scalars functor on morphisms, proving that the result is an $L_\infty$-module homomorphism.  Finally, we complete the proof of functoriality.  The end of this section contains a technical lemma that is applied several times throughout the aforementioned proofs.

\begin{lemma}[Objects]
	\label{Lem:Objects}
	Suppose $I: (L',l') \to (L,l)$ is a map of $L_\infty$-algebras.  If $(M,k)$ is an $L$-module, then $I^*M:=(M,k')$ is an $L'$-module, where $k'_n: L^{\otimes n-1} \otimes M \to M$ is given by
	\begin{align*}
				k_n' &=\sum_{r=1}^{n-1} \sum_{\substack{\tau\in S'(i_1,\ldots,i_r) \\ i_1+\ldots+i_r=n-1}} k_{r+1} \circ (I_{i_1} \otimes \cdots \otimes I_{i_r} \otimes \Id) \circ (\tau^\bullet \otimes \Id)
	\end{align*}
\end{lemma}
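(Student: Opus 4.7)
The plan is to verify the $L'$-module identity for $k'$ by expanding $k'$ via its defining formula, reorganizing the result using the $L_\infty$-morphism relation for $I$, and then invoking the $L$-module identity for $k$.

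More explicitly, I would start from the identity to be proved,
\begin{align*}
\sum_{\substack{p+q=n+1 \\ p<n}}\sum_{\sigma(n)=n} k'_q\circ (l'_p \otimes \Id) \circ \sigma^{\bullet} \;+\; \sum_{p+q=n+1} \sum_{\sigma(p)=n} k'_q\circ \delta^{\bullet} \circ ( k'_p\otimes \Id) \circ \sigma^{\bullet} \;=\; 0,
\end{align*}
and substitute the defining expression for $k'$ (and for $k'_p$ inside the second sum) everywhere it appears. The first resulting sum is indexed over trees in which $l'_p$ sits directly above one of the $I_{i_j}$ factors feeding into a single $k_{r+1}$; the second is indexed over trees in which two $k$'s are stacked, each with its own forest of $I_{i_j}$'s feeding its algebra inputs, and the output of the upper $k$ occupying the module slot of the lower one.

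Next, I would apply the $L_\infty$-morphism relation for $I$,
\begin{align*}
\sum_{j+k=m+1}\sum_{\sigma\in S(k,m-k)} I_j \circ (l'_k \otimes \Id) \circ \sigma^\bullet \;=\; \sum_{\tau \in S'(i_1,\ldots,i_r)} l_r \circ (I_{i_1} \otimes \cdots \otimes I_{i_r})\circ \tau^\bullet,
\end{align*}
locally, at each place where an $l'_p$ sits above a configuration of $I$'s. This converts the corresponding subtree into a sum of configurations in which a tree of $I$'s feeds a single $l_r$ on the $L$-side. Regrouping terms by the $I$-forest assigned to the external $L'$-inputs, the whole expression factors as a sum over $I$-forests of the $L$-module identity for $k$ applied to the $I$-processed inputs. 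Since $(M,k)$ is an $L$-module, each such inner factor vanishes, and so the whole sum is zero.

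The main obstacle is the combinatorial bookkeeping: I need to match the $S'$-unshuffles appearing in the definition of $k'$ against the pairs of unshuffles coming from the module and morphism axioms, and use skew-symmetry of $k$ and of each $I_i$ to verify that every tree-partition of the $n-1$ algebra inputs appears with the correct multiplicity. This is closely parallel to the reindexing performed in Steps 3--11 of the composition lemma above, and will make essential use of the technical lemma promised at the end of the section, which is tailored to precisely the translation between $S'$- and $S$-type unshuffle sums that this regrouping requires.
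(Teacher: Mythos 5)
Your plan is correct and follows essentially the same route as the paper's proof: substitute the definition of $k'$ into both sums, use the $L_\infty$-morphism relation for $I$ to trade each $l'_p$ sitting above an $I$-block for an $l$-operation on the $L$-side, and then regroup over $I$-forests (via the technical unshuffle lemma) so that the two resulting sums cancel by the $L$-module relation for $k$. The combinatorial bookkeeping you flag is exactly where the paper spends its effort (Steps 3--12 and 14--17), but the strategy is the same.
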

\begin{proof}

The idea of the proof is straightforward.  We will first make a substitution using the definition of $k'$ (steps 1-2).  We will then use the $L_\infty$-algebra homomorphism relation for $I$ to exchange any $I$ and $l'$ terms (steps 3-9). The terms that remain will then cancel by applying the $L_\infty$-module relation for $k$ (steps 10-19).  Pictures representing each step in the proof are given in the appendix. 

	\vspace{1em}
	\noindent \textbf{Step 1.} The $L_\infty$ relation for $k'_n$ that we need to show is zero is: 
		\begin{align*}
			\sum_{\substack{p+q=n+1 \\ p<n}}\sum_{\sigma(n)=n} k'_q\circ (l'_p \otimes \Id) \circ \sigma^\bullet 
			+
			\sum_{p+q=n+1} \sum_{\sigma(p)=n}  k'_q\circ \delta^\bullet \circ ( k'_p\otimes \Id) \circ \sigma^\bullet
			= 0
		\end{align*}

\vspace{1em}
	\noindent \textbf{Step 2.} Focusing only on the first double sum for now, we substitute for $k_q'$ using its definition:

	$$
	\sum_{\substack{p+q=n+1 \\ p<n}}\sum_{\sigma(n)=n} \sum_{\substack{\tau\in S'(i_1,\ldots,i_r) \\ 1\leq r\leq q-1\\ i_1+\ldots+i_r=q-1}} k_{r+1}\bigg((I_{i_1}\otimes \cdots \otimes I_{i_r} \otimes \Id) \circ (\tau^\bullet \otimes \Id) \circ (l'_p \otimes \Id) \circ \sigma^\bullet\bigg)
	$$

	\noindent \textbf{Step 3.} The goal now is to use the morphism relation to commute the $l'_p$ and $I$ terms.  To do so, we will break down this sum by the specific morphism relation that we will apply $(k=1,\ldots, n-1)$.  In particular, this is determined by the sum of $p$ and the size of the block to which $\tau$ sends $l'_p$.  We will denote the block containing $l'_p$ by $i_l$, and we will denote its size by $s$.
	\begin{align*}
	&\sum_{p=1}^{n-1} \sum_{\sigma(n)=n} \sum_{\substack{\tau\in S'(i_1,\ldots,i_r) \\ 1\leq r\leq n-p\\ i_1+\ldots+i_r=n-p}} k_{r+1}\bigg((I_{i_1}\otimes \cdots \otimes I_{i_r} \otimes \Id) \circ (\tau^\bullet \otimes \Id) \circ (l'_p \otimes \Id) \circ \sigma^\bullet\bigg) \\
	=&\sum_{p=1}^{n-1} \sum_{\sigma(n)=n} \sum_{s=1}^{n-p} \sum_{\substack{\tau\in S'(i_1,\ldots,i_r) \\ 1\leq r\leq n-p\\ i_1+\ldots+i_r=n-p \\ i_l=s}}  k_{r+1}\bigg((I_{i_1}\otimes \cdots I_{i_l} \otimes \cdots \otimes I_{i_r} \otimes \Id) \circ (\tau^\bullet \otimes \Id) \circ (l'_p \otimes \Id) \circ \sigma^\bullet\bigg)
	\end{align*}

	We can now reindex over the sum of $p$ and $s$ (on the $(p,s)$-plane, this is summing over the diagonal) to obtain
	$$
	\sum_{k=1}^{n-1} \sum_{p+s=k+1}\sum_{\sigma(n)=n} \sum_{\substack{\tau\in S'(i_1,\ldots,i_r) \\ 1\leq r\leq n-p\\ i_1+\ldots+i_r=n-p \\ i_l= s}}  k_{r+1}\bigg((I_{i_1}\otimes \cdots \otimes I_{i_l} \otimes \cdots \otimes I_{i_r} \otimes \Id) \circ (\tau^\bullet \otimes \Id) \circ (l'_p \otimes \Id) \circ \sigma^\bullet\bigg)
	$$

	\vspace{1em}

\noindent \textbf{Step 4.} Here, we change $\tau$ to $\tau'$ and introduce $\lambda$.  Since $\tau$ is an unshuffle, we can make two observations.  First, $\tau$ sends $l'_p$ to the first input of $I_{i_l}$.  Second, in the partition $i_1+\ldots+i_r=n-p$, the block $i_l$ is the first of its size (i.e. $t<l$ implies $i_t < i_l$), since the first elements of blocks of the same size are in order.  This information allows us to remove $l'_p$ as an input to $\tau$, and then put it back in the correct spot after the remaining elements are permuted.  That is, $\tau$ corresponds to an $(i_1, \ldots, i_l-1, \ldots, i_r)$-unshuffle $\tau'$ in $S_{n-p-1}$, and we will send $l'_p$ to the first input of $I_{i_l}$ via a permutation $\lambda$ after we apply $\tau'$.  Special care is needed when $s=1$, in which case $\tau' \in S(0,i_2, \ldots, i_r)$, and no element will go to the block of size 0.  

Note: because $\tau\in S'(i_1, \ldots, i_r)$, we had conditions that $i_1 \leq \cdots \leq i_r$ and that the order of the first elements among these blocks is preserved.  In the rest of the proof, we must remember these restrictions inherited from $\tau$. We obtain,

	\begin{equation*}
	\begin{split}
	\sum_{k=1}^{n-1}  \sum_{p+s=k+1} \ & \sum_{\substack{\sigma(n)=n}} \   \sum_{\substack{\tau'\in S(i_1,i_2,\ldots,i_l-1, \ldots, i_r)\\ i_1+\ldots+i_r=n-p \\ i_l = s}} \\ 
	&k_{r+1}\bigg((I_{i_1} \otimes \cdots \otimes I_{i_l} \otimes \cdots \otimes I_{i_r} \otimes \Id) \circ \lambda^\bullet \circ (\Id \otimes \tau'^\bullet \otimes \Id) \circ (l'_p \otimes \Id) \circ \sigma^\bullet \bigg)
	\end{split}
	\end{equation*}

\vspace{2em}

\noindent \textbf{Step 5.} Combine $\sigma$ and $\tau'$ into $\psi$.  Now we observe that applying a $p$-unshuffle and then $\tau'$ to the remaining inputs is equivalent to doing a $(p,i_1, \ldots, i_r)$-unshuffle to all of the inputs at once.  We obtain

	$$ \displaystyle{\sum_{k=1}^{n-1} \sum_{p+s=k+1} \ \sum_{\substack{\psi\in S(p,i_1,\ldots,i_l-1,\ldots,i_r,1) \\ i_1+\ldots+i_r=n-p \\ i_l=s \\ \psi(n)=n}}  k_{r+1}\bigg( (I_{i_1} \otimes \cdots \otimes I_{i_l} \otimes \cdots \otimes I_{i_r}) \circ \lambda^{\bullet}  \circ (l'_p \otimes \Id) \circ \psi^\bullet \bigg)}$$

\vspace{2em}
\noindent \textbf{Step 6.} Change from $\psi$ to $\mu, \alpha, \omega$.  Notice that a $(p,i_1, \ldots, i_l-1,\ldots i_r)$-unshuffle is the same as first doing a $(p+i_l-1)$-unshuffle, and then doing a $(p,i_l-1)$-unshuffle on the $(p+i_l-1)$-block and an $(i_1, \ldots, \widehat{i_l},\ldots, i_r)$-unshuffle on the rest.  Since we are fixing $i_l=s$, note that $p+i_l-1=k$.

	Afterwards, we need to apply a permutation $\omega$ to move the strands in the $i_l$ block back to their original position between the $i_{l-1}$ and $i_{l+1}$ blocks.  That is, $\omega$ is the block permuation so that applying $\omega^\bullet$ to the blocks $\{1,i_l-1, i_1, \ldots, \widehat{i_l}, \ldots, i_r\}$ yields $\{1,i_1, \ldots, i_l-1,\ldots i_r\}$.  We apply $\lambda^\bullet$ after $\omega^\bullet$ to move the $l'_p$ term.

	\begin{equation*}
	\begin{split}
	\sum_{k=1}^{n-1} & \sum_{p+s=k+1}  \  \sum_{\substack{\mu \in S(k, i_1, \ldots, \widehat{i_l}, \ldots, i_r, 1) \\ i_1 + \cdots + i_r = n-p \\i_l=s \\ \mu(n)=n}} \  \sum_{\alpha \in S(p,k-p)}   \\ 
	&k_{r+1}\bigg((I_{i_1} \otimes \cdots \otimes I_{i_l} \otimes \cdots \otimes I_{i_r}  \otimes \Id) \circ \lambda^\bullet \circ \omega^\bullet \circ (l'_p \otimes \Id) \circ (\alpha^\bullet \otimes \Id) \circ \mu^\bullet \bigg)
	\end{split}
	\end{equation*}

\vspace{2em}
\noindent \textbf{Step 7.} Since $k_{r+1}$ is skew-symmetric, we can move the $I_{i_l}$ term to the first input.

\begin{equation*}
	\begin{split}
	 \sum_{k=1}^{n-1} & \sum_{p+s=k+1} \ \sum_{\substack{\mu \in S(k, i_1, \ldots, \widehat{i_l}, \ldots, i_r, 1) \\ i_1 + \cdots + i_r = n-p \\i_l=s \\ \mu(n)=n}}  \ \sum_{\alpha \in S(p,k-p)} \\
	 &  k_{r+1}\bigg((I_{i_l} \otimes I_{i_1} \otimes \cdots \otimes \widehat{I_{i_l}}  \otimes \cdots \otimes I_{i_r}  \otimes \Id) \circ (l'_p \otimes \Id) \circ (\alpha^\bullet \otimes \Id) \circ \mu^\bullet \bigg)
	\end{split}
	\end{equation*}

\vspace{2em}
\noindent \textbf{Step 8.} Rewrite the maps as

\begin{equation*}
	\begin{split}
	\sum_{k=1}^{n-1} & \sum_{p+s=k+1} \  \sum_{\substack{\mu \in S(k, i_1, \ldots, \widehat{i_l}, \ldots, i_r, 1) \\ i_1 + \cdots + i_r = n-p \\i_l=s \\ \mu(n)=n}} \  \sum_{\alpha \in S(p,k-p)}  \\ 
	& k_{r+1}\bigg( \big[I_{i_l} \circ (l'_p \otimes \Id) \circ \alpha^\bullet \big] \otimes \big[(I_{i_1} \otimes \cdots \otimes \widehat{I_{i_l}}  \otimes \cdots \otimes I_{i_r}  \otimes \Id) \big] \circ \mu^\bullet \bigg)
	\end{split}
	\end{equation*}

\vspace{2em}
\noindent \textbf{Step 9.} Apply the $L_\infty$-algebra homomorphism relation to the terms $I_{i_l} \circ (l'_p \otimes \Id) \circ \alpha^\bullet$.  Since we no longer are keeping track of $p$, we also use the fact that $p+s=k+1$ to rewrite the conditions for $\mu$.

\begin{align*}
	\sum_{k=1}^{n-1} & \sum_{\substack{1 \leq t \leq k \\ a_1 + \ldots +a_t = k \\ a_r \geq 1}}  \sum_{\gamma \in S'(a_1, \ldots, a_t)} \sum_{\substack{\mu \in S(k, i_1, \ldots, \widehat{i_l}, \ldots, i_r, 1) \\ i_1 + \cdots \widehat{i_l} + \cdots + i_r = n-1-k \\ \mu(n)=n}} \\
	&k_{r+1} \bigg( \big[ l_t \circ (I_{a_1} \otimes \cdots \otimes I_{a_t}) \circ \gamma^\bullet \big] \otimes  \big[(I_{i_1} \otimes \cdots \otimes \widehat{I_{i_l}}  \otimes \cdots \otimes I_{i_r} \otimes \Id) \big] \circ \mu^\bullet \bigg)
\end{align*}

\vspace{2em}
\noindent \textbf{Step 10.} Rewrite the maps as

\begin{equation*}
	\begin{split}
	\sum_{k=1}^{n-1} & \sum_{\substack{1 \leq t \leq k \\ a_1 + \ldots +a_t = k \\ a_r \geq 1}}  \sum_{\gamma \in S'(a_1, \ldots, a_t)} \sum_{\substack{\mu \in S(k, i_1, \ldots, \widehat{i_l}, \ldots, i_r, 1) \\ i_1 + \cdots \widehat{i_l} + \cdots + i_r = n-1-k \\ \mu(n)=n}} \\
	& k_{r+1} \Bigg( (l_t\otimes \Id) \circ (I_{a_1}\otimes \cdots \otimes I_{a_t} \otimes I_{i_1} \otimes \cdots \otimes \widehat{I_{i_l}} \otimes \cdots \otimes I_{i_r}) \circ (\gamma^\bullet \otimes \Id) \circ \mu^\bullet \Bigg) 
\end{split}
\end{equation*}

\vspace{2em}
\noindent \textbf{Step 11.} We can combine $\mu$ and $\gamma$ into one permutation $\eta$.  Indeed, applying $\mu$ and then an $(a_1, \ldots, a_t)$-unshuffle on the $k$-block is the same as applying an $(a_1, \ldots, a_t, i_1, \ldots, \widehat{i_l}, \ldots, i_r,1)$-unshuffle all at once.

	$$\sum_{k=1}^{n-1}\sum_{\substack{\eta \in S(a_1, \ldots, a_t, i_1, \ldots,\widehat{i_l}, \ldots i_r,1)\\ 1\leq t \leq k \\ a_1 + \ldots + a_t = k \\ i_1 + \cdots + \widehat{i_l} + \cdots + i_r = n-1-k \\ \eta(n)=n}}  k_{r+1} \bigg((l_t\otimes \Id) \circ (I_{a_1} \otimes \cdots \otimes I_{a_t} \otimes I_{i_1} \otimes \cdots \otimes \widehat{I_{i_l}} \otimes \cdots \otimes I_{i_r} \otimes \Id) \circ \eta^\bullet\bigg)$$

\vspace{2em}
\noindent \textbf{Step 12.}  Since $k=1, \ldots, n-1$, we can drop the sum over $k$ from the notation and just require that $a_1, \ldots, a_t, i_1, \ldots, i_r$ is a partition of $n-1$, with $a_1\leq \ldots \leq a_t$, $i_1 \leq \ldots \leq i_r$, and $t\geq 1$ and $r\geq 1$.  If we fix $\eta \in S(a_1, \ldots, a_t, i_1, \ldots, \widehat{i_{l}}, \ldots, i_r)$, we don't have any relation between the two partitions $a_1\leq \ldots \leq a_t$ and $i_1 \leq \ldots \leq i_r$.  That is, the sizes of the blocks are in order as part of their respective partitions, but it might not be the case that $a_1, \ldots, a_t, i_1, \ldots, i_r$ is in increasing order as a whole.  However, from these two partitions, we can use an unshuffle to construct a new partition where the sizes of the boxes are in order.  Indeed, define $\sigma$ so that $(\sigma^{-1})^\bullet$ arranges the $a_1, \ldots, a_t, i_1, \ldots, i_r$ in increasing order (to get a unique $\sigma$, require that the order of the $a$'s is preserved, the order of the $i$'s is preserved, and that, using $\eta$, the first elements of boxes of same size are in order).  Then let $c_1, \ldots, c_\alpha:= (\sigma^\bullet)^{-1}(a_1, \ldots, a_t, i_1, \ldots, i_r)$.  To summarize, what we have done is define a new partition $c_1, \ldots, c_\alpha$ of $n-1$ so that $c_{\sigma(1)} = a_1, \ldots, c_{\sigma(t)} = a_t, c_{\sigma(t+1)} = i_1, \ldots, c_{\alpha} = i_r$.  Of course, since $a_1\leq \ldots \leq a_t$, $\sigma$ is a $t$-unshuffle. Moreover, we define $\tau$ by requiring that the elements that $\eta$ puts into the $a_1, \ldots, a_t$ and $i_1, \ldots, i_r$-boxes are precisely those that $\tau$ puts into the $c_{\sigma(1)}, \ldots, c_{\sigma(t)}$ and $c_{\sigma(t+1)}, \ldots, c_{\alpha}$-boxes, respectively.  Finally, since $\alpha = t+r-1$, we relabeled $k_{r+1}$ as $k_{\alpha+2-t}$.  Note that we can reverse this whole construction to obtain an inverse correspondence.  This process is similar to Lemma 4.

	$$\sum_{\substack{\tau \in S'(c_1, \ldots, c_{\alpha}) \\ c_1 + \ldots + c_{\alpha} = n-1 \\ 1 \leq \alpha \leq n-1}}  \   \sum_{\substack{\sigma \in S(t,\alpha+1-t) \\ \sigma(\alpha+1)=\alpha+1 \\ 1 \leq t \leq \alpha + 1}} k_{\alpha+2-t} \circ (l_t \otimes \Id) \circ \sigma^\bullet  \circ (I_{c_1} \otimes \cdots \otimes I_{c_\alpha}\otimes \Id) \circ (\tau^\bullet \otimes \Id)$$

\vspace{1em}
\noindent \textbf{Step 13.} On the other hand, we now examine the second term in the original sum:
$$
\sum_{p+q=n+1}\sum_{\sigma(p)=n} k'_q\circ \delta^\bullet \circ ( k'_p\otimes \Id) \circ \sigma^\bullet
$$

\vspace{1em}
\noindent \textbf{Step 14.}  Use the definition of $k'$ to substitute for $k_p'$ and $k_q'$.  The cases $p=1$ and $q=1$ require some care; they correspond to the cases $r=0$ and $s=0$, respectively.  If $r=0$, then $\phi=\Id$, and if $s=0$, then $\psi=\Id$.  We also disallow $r$ and $s$ to be zero simultaneously.  

\begin{equation*}
\begin{split}
	&\sum_{\substack{p+q=n+1 \\ 1\leq p \leq n}} \sum_{\sigma(p)=n} \sum_{\substack{\phi\in S'(i_1,\ldots,i_r) \\ 0 \leq r \leq p-1 \\ i_1+\ldots+i_r=p-1}}\sum_{\substack{\psi\in S'(j_1,\ldots,j_s) \\ 0 \leq s\leq n-p\\  j_1+\ldots+j_s=n-p}}   \\ 
	& k_{s+1} \circ (I_{j_1}\otimes \cdots \otimes I_{j_s}\otimes \Id) \circ (\psi \otimes \Id) \circ \delta^\bullet \circ (k_{r+1} \otimes \Id) \circ (I_{i_1}\otimes \cdots \otimes I_{i_r} \otimes \Id \big) \circ (\phi^\bullet \otimes \Id) \circ \sigma^\bullet
\end{split}
\end{equation*}

\vspace{1em}
\noindent \textbf{Step 15.}  Commuting composition and tensor product, and replacing $\delta$ with an analogous $\delta'$ that ensures the module element is in the correct spot, we get

\begin{equation*}
\begin{split}
\sum_{\substack{p+q=n+1\\1\leq p\leq n}}\sum_{\sigma(p)=n} & \sum_{\substack{\phi\in S'(i_1,\ldots,i_r) \\ 0 \leq r \leq p-1 \\ i_1+\ldots+i_r=p-1}} \sum_{\substack{\psi\in S'(j_1,\ldots,j_s) \\ 0 \leq s \leq n-p \\ j_1+\ldots+j_s=n-p}} \\
&  k_{s+1} \circ \delta'^\bullet \circ (k_{r+1} \otimes \Id) \circ (I_{i_1}\otimes \cdots I_{i_r} \otimes \Id \otimes I_{j_1}\otimes \cdots \otimes I_{j_s}) \circ (\phi^\bullet \otimes \Id \otimes \psi^\bullet) \circ \sigma^\bullet  \\
\end{split}
\end{equation*}

\vspace{1em}
\noindent \textbf{Step 16.}  Instead of summing over $r$ and $s$ separately, we can sum over the diagonal $\alpha = r+s$.  

\begin{equation*}
\begin{split}
\sum_{p=1}^n \sum_{\sigma(p)=n} & \sum_{\substack{1 \leq \alpha \leq n-1 \\ r+s=\alpha \\ r,s \geq 0}} \sum_{\substack{\phi\in S'(i_1,\ldots,i_r) \\ i_1+\ldots+i_r=p-1}} \sum_{\substack{\psi\in S'(j_1,\ldots,j_s) \\ j_1+\ldots+j_s=n-p}} \\ 
&k_{s+1} \circ \delta'^\bullet \circ (k_{r+1} \otimes \Id) \circ (I_{i_1}\otimes \cdots I_{i_r} \otimes \Id \otimes I_{j_1}\otimes \cdots \otimes I_{j_s}) \circ (\phi^\bullet \otimes \Id \otimes \psi^\bullet) \circ \sigma^\bullet
\end{split}
\end{equation*}

\vspace{1em}
\noindent \textbf{Step 17.}  Apply Lemma 4, where $r+1$ above corresponds to $t$ below.

$$
\sum_{\substack{\tau \in S'(c_1,\ldots,c_\alpha) \\ c_1+\ldots+c_\alpha=n-1}}  \sum_{\substack{\sigma \in S(t,\alpha+1-t) \\ \sigma(t)=\alpha+1 \\ 1 \leq t \leq \alpha+1}}   k_{\alpha+2-t}\circ \delta'^\bullet \circ (k_{t} \otimes \Id) \circ (\sigma^\bullet\otimes \Id) \circ (I_{c_1} \otimes \cdots \otimes I_{c_\alpha}\otimes \Id) \circ (\tau^\bullet \otimes \Id)
$$

\noindent \textbf{Step 18.} Summarizing what we've done so far, we've shown that the original sum 
\begin{align*}
			&\sum_{p+q=n+1}\sum_{\sigma(n)=n} k'_q\circ (l'_p \otimes \Id) \circ \sigma^\bullet  \\
			&+\\
			&\sum_{p+q=n+1}\sum_{\sigma(p)=n} k'_q\circ (\Id \otimes k'_p) \circ \sigma^\bullet
		\end{align*}
can be rewritten as

\begin{align*}
	&\sum_{\substack{\tau \in S'(c_1, \ldots, c_{\alpha}) \\ c_1 + \ldots + c_{\alpha} = n-1}}  \   \sum_{\substack{\sigma \in S(t,\alpha+1-t) \\ \sigma(\alpha+1)=\alpha+1 \\ 1 \leq t \leq \alpha+1}} k_{\alpha+2-t} \circ (l_t \otimes \Id) \circ \sigma^\bullet  \circ (I_{c_1} \otimes \cdots \otimes I_{c_\alpha}\otimes \Id) \circ (\tau^\bullet \otimes \Id)\\
	+&\sum_{\substack{\tau \in S'(c_1,\ldots,c_\alpha) \\ c_1+\ldots+c_\alpha=n-1}}  \sum_{\substack{\sigma \in S(t,\alpha+1-t) \\ \sigma(t)=\alpha+1 \\ 1 \leq t \leq \alpha+1}}   k_{\alpha+2-t}\circ \delta'^\bullet \circ (k_{t} \otimes \Id) \circ \sigma^\bullet \circ (I_{c_1} \otimes \cdots \otimes I_{c_\alpha}\otimes \Id) \circ (\tau^\bullet \otimes \Id)
\end{align*}

\vspace{1em}
\noindent \textbf{Step 19.} Letting $F=(I_{c_1} \otimes \cdots \otimes I_{c_\alpha}\otimes \Id) \circ (\tau^\bullet\otimes \Id)$ and setting $u=\alpha+2-t$, this becomes
\begin{align*}
	&\sum_{\substack{\tau \in S'(c_1, \ldots, c_{\alpha}) \\ c_1 + \ldots + c_{\alpha} = n-1}} \sum_{t+u=\alpha+2}   \sum_{\substack{\sigma \in S(t,\alpha+1-t) \\ \sigma(\alpha+1)=\alpha+1}} k_{u} \circ (l_t' \otimes \Id) \circ \sigma^\bullet  \circ F \\
	+&\sum_{\substack{\tau \in S'(c_1,\ldots,c_\alpha) \\ c_1+\ldots+c_\alpha=n-1}}  \sum_{t+u=\alpha+2} \sum_{\substack{\sigma \in S(t,\alpha+1-t) \\ \sigma(t)=\alpha+1}}   k_{u}\circ \delta'^\bullet \circ (k_{t} \otimes \Id) \circ \sigma^\bullet \circ F
\end{align*}
which cancel by the module relation. 

\end{proof}

	\begin{lemma}[Morphisms]
	Suppose $L$ and $L'$ are $L_\infty$-algebras and $M$ and $N$ are $L$-modules.  Let $I: L' \to L$ be an $L_\infty$-algebra homomorphism, and let $f: M \to N$ be an $L$-module homomorphism.  Set $(I^*f)_1 = f_1$, and for $n\geq 2$, define $$(I^*f)_n: (L')^{\otimes n-1} \otimes I^*M \to I^*N$$ 
 by 
	$$(I^*f)_n = \sum_{r=1}^{n-1} \sum_{\substack{\tau \in S'(i_1,\ldots, i_r) \\ i_1 +\ldots + i_r = n-1}} f_{r+1} \circ (I_{i_1} \otimes \cdots \otimes I_{i_r} \otimes \Id) \circ (\tau^\bullet \otimes \Id)$$
	Then $I^*f: I^*M \to I^*N$ is a homomorphism of $L'$-modules.
	\end{lemma}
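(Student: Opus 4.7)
The plan is to mimic the strategy of the proof of Lemma~\ref{Lem:Objects}, but with the final cancellation coming from the $L_\infty$-module homomorphism relation for $f: M \to N$ rather than from the $L_\infty$-module relation. Write $k$, $k^N$ for the $L$-actions on $M, N$, and $k'$, $\tilde k'$ for the induced $L'$-actions on $I^*M, I^*N$. The identity to check is
\begin{align*}
&\sum_{\substack{i+j=n+1 \\ i<n}} \sum_{\sigma(n)=n} (I^*f)_j \circ (l'_i \otimes \Id) \circ \sigma^\bullet + \sum_{i+j=n+1} \sum_{\sigma(i)=n} (I^*f)_j \circ \delta^\bullet \circ (k'_i \otimes \Id) \circ \sigma^\bullet\\
&\qquad = \sum_{r+s=n+1} \sum_\tau \tilde k'_r \circ (\Id \otimes (I^*f)_s) \circ (\tau^\bullet \otimes \Id).
\end{align*}

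The idea is that both sides, after expansion using the definitions of $(I^*f)_\bullet$, $k'_\bullet$, and $\tilde k'_\bullet$, can be written as a sum indexed by unshuffles $\tau \in S'(c_1,\ldots,c_\alpha)$ with $c_1+\cdots+c_\alpha = n-1$, of expressions of the form $Z \circ (I_{c_1} \otimes \cdots \otimes I_{c_\alpha} \otimes \Id) \circ (\tau^\bullet \otimes \Id)$; the lemma then reduces to equality of the inner cores $Z$, which is exactly the $L_\infty$-module homomorphism relation for $f$ applied to $\alpha+1$ inputs.

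Concretely, I would first take the $(I^*f)_j \circ (l'_i \otimes \Id)\circ \sigma^\bullet$ term, substitute the definition of $(I^*f)_j$, and then run the $L_\infty$-algebra homomorphism relation for $I$ to exchange each $l'_i$ for $l_t$'s sandwiched between $I$-components. This is Steps~2--12 of Lemma~\ref{Lem:Objects}, verbatim, except the outermost $k_{r+1}$ is replaced by $f_{r+1}$; the output has core $f_{\alpha+2-t}\circ (l_t \otimes \Id)\circ \sigma^\bullet$. Second, I would expand $(I^*f)_j \circ \delta^\bullet \circ (k'_i \otimes \Id) \circ \sigma^\bullet$ by substituting both $(I^*f)_j$ and $k'_i$ and then invoking the combinatorial consolidation of Steps~13--17 (together with the Lemma~4 reindexing) to produce core $f_{\alpha+2-t} \circ \delta'^\bullet \circ (k_t \otimes \Id) \circ \sigma^\bullet$. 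On the right-hand side, I would expand $(I^*f)_s$ and rearrange to get core $k^N_r \circ (\Id \otimes f_s) \circ (\tau^\bullet \otimes \Id)$ under the same outer form. Once the three sums are brought to this common shape, applying the $L_\infty$-module homomorphism relation for $f$ to each core yields the desired identity term-by-term.

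The main obstacle is purely combinatorial: one must check that the repeated reindexings over nested unshuffles genuinely produce \emph{exactly} the left-hand side of the $f$-homomorphism relation in its correct form — including the $\sigma(n)=n$ vs.\ $\sigma(i)=n$ distinction and the placement of $\delta^\bullet$ — and that the right-hand side expansion delivers the matching right-hand side of that same relation with identical indexing sets. The trickiest piece is the second term, since it involves two nested substitutions of the definition of $k'$ and $(I^*f)$ before the homomorphism relation for $I$ can even be applied; I expect this to require a careful analogue of Lemma~4, possibly invoked twice, to merge the inner and outer unshuffles into a single $\tau \in S'(c_1,\ldots,c_\alpha)$. Once this bijection is set up correctly, the proof proceeds by direct analogy with Lemma~\ref{Lem:Objects}.
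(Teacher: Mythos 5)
Your proposal follows essentially the same route as the paper's proof: split the left-hand side into the $l'$-part and the module-action part, expand the definitions of $(I^*f)_\bullet$ and the induced actions, push the $l'_i$ past the $I$-components via the $L_\infty$-algebra homomorphism relation (reusing the steps from the Objects lemma), consolidate the nested unshuffles into a single $\tau\in S'(c_1,\ldots,c_\alpha)$ via the technical unshuffle lemma (which the paper indeed invokes twice, once for each side), and finish by applying the $L_\infty$-module homomorphism relation for $f$ to the resulting cores on $\alpha+1$ inputs. The intermediate cores you predict match the paper's Steps 6, 7, and 15, so the argument is correct and not genuinely different.
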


\begin{proof}

We will start by examining the $L_\infty$-module homomorphism relation.  After replacing $I^*f$ and $m'_i$ with their definitions on the left-hand side (steps 1-4), we will rearrange the sum (steps 5-6) and apply the $L_\infty$-algebra relation for $I$ (step 7).  We then rewrite the terms (steps 8-9) and apply the module homomorphism relation for $f$ (step 10).  We then show that the result is equal to the right-hand side (steps 11-16).

\vspace{1em}
\noindent \textbf{Step 1.} To start, we will denote the operations of $M,N,I^*M,I^*N$ by $m,n,m',n'$ respectively.  To show that $I^*f$ is a homomorphsism of $L'$-modules, we must show that it satisfies the $L_\infty$-module homomorphism relation
	$$\sum_{i+j=n+1} \sum_{\sigma} (I^*f)_j \circ (m'_i \otimes \Id) \circ \sigma^\bullet = \sum_{r+s=n+1} \sum_{\tau} n'_r \circ (\Id \otimes (I^*f)_s) \circ \tau^\bullet$$
	where $\sigma$ is an $(i,n-i)$-unshuffle and $\tau$ is an $(n-s,s-1)$-unshuffle.

\vspace{1em}
\noindent \textbf{Step 2.}
	Focusing only on the left-hand side, we break this sum up into two parts
	\begin{align*}
		&\sum_{i+j=n+1} \sum_{\sigma(i)=n} (I^*f)_j \circ \lambda^\bullet \circ (m'_i \otimes \Id) \circ \sigma^\bullet \\
		+ &\sum_{\substack{i+j=n+1 \\ i<n}} \sum_{\substack{\sigma(n)=n}} (I^*f)_j \circ (l'_i \otimes \Id) \circ \sigma^\bullet
	\end{align*}
	where we use skew-symmetry and introduce the permutation $\lambda$ to insert the module element in the correct spot.

\vspace{1em}
\noindent \textbf{Step 3.}
	Replace $I^*f$ with its definition.  Note that we've allowed $r=0$ in the first sum to include the case $j=1$, which corresponds to $f_1 \circ m'_n \circ \sigma^\bullet$.  If $j$ is anything but 1, $r=0$ makes no contribution to the sum.

\vspace{1em}

	\begin{align*}
		&\sum_{\substack{i+j=n+1}} \sum_{\sigma(i)=n} \sum_{r=0}^{j-1} \sum_{\substack{\tau \in S'(i_1,\ldots, i_r) \\ i_1 +\ldots + i_r = j-1}} f_{r+1} \circ (I_{i_1} \otimes \cdots \otimes I_{i_r} \otimes \Id) \circ (\tau^\bullet \otimes \Id) \circ \lambda^\bullet \circ (m'_i \otimes \Id) \circ \sigma^\bullet \\
		+&\sum_{\substack{i+j=n+1 \\ 1 \leq i < n}} \sum_{\sigma(n)=n} \sum_{r=1}^{j-1} \sum_{\substack{\tau \in S'(i_1,\ldots, i_r) \\ i_1 +\ldots + i_r = j-1}} f_{r+1} \circ (I_{i_1} \otimes \cdots \otimes I_{i_r}\otimes \Id) \circ (\tau^\bullet \otimes \Id) \circ (l'_i \otimes \Id) \circ \sigma^\bullet\\
	\end{align*}

\vspace{1em}
\noindent \textbf{Step 4.}
	Now focus on the first sum and replace $m_i'$ with its definition.  Similar to the above, we've allowed for the case $s=0$ to include the case $i=1$, which corresponds to $(I^*f)_n \circ  \lambda^\bullet \circ (m'_1 \otimes \Id) \circ \sigma^\bullet$.  If $i$ is anything but 1, $s=0$ makes no contribution to the sum.

\begin{equation*}
	\begin{split}
		&\sum_{\substack{i+j=n+1}} \sum_{\sigma(i)=n} \sum_{r=0}^{j-1} \sum_{\substack{\tau \in S'(i_1,\ldots, i_r) \\ i_1 +\ldots + i_r = j-1}} \sum_{s=0}^{i-1} \sum_{\substack{\psi\in S'(j_1,\ldots,j_a) \\ j_1+\ldots+j_s=i-1}} \\ 
		&\quad f_{r+1} \circ (I_{i_1} \otimes \cdots \otimes I_{i_r} \otimes \Id) \circ (\tau^\bullet \otimes \Id) \circ \lambda^\bullet \circ \left[\left(m_{s+1} \circ (I_{j_1} \otimes \cdots \otimes I_{j_s} \otimes \Id) \circ \psi^\bullet \right) \otimes \Id \right] \circ \sigma^\bullet \\
	\end{split}
\end{equation*}

\vspace{1em}
\noindent \textbf{Step 5.}
	Rewrite the sum by commuting composition and tensor product and considering the diagonal $\alpha = r+s$ instead of $r$ and $s$ individually.  Observe that one of $r$ and $s$ can be 0, but not both at the same time.

	\begin{equation*}
	\begin{split}
		\sum_{\substack{i+j=n+1}} &\sum_{\sigma(i)=n} \sum_{\substack{1 \leq \alpha \leq n-1 \\ r+s=\alpha \\r,s\geq 0}} \sum_{\substack{\tau \in S'(i_1,\ldots, i_r) \\ i_1 +\ldots + i_r = j-1}}  \sum_{\substack{\psi\in S'(j_1,\ldots,j_s) \\ j_1+\ldots+j_s=i-1}} \\
		&f_{r+1} \circ \omega^\bullet \circ (m_{s+1} \otimes \Id) \circ (I_{j_1} \otimes \cdots \otimes I_{j_s} \otimes \Id \otimes I_{i_1} \otimes \cdots \otimes I_{i_r}) \circ (\psi^\bullet \otimes \Id \otimes \tau^\bullet) \circ \sigma^\bullet
	\end{split}
	\end{equation*}

\vspace{1em}
\noindent \textbf{Step 6.}
	Apply Lemma \ref{UL} to obtain

	\begin{align*}
		&\sum_{\substack{\pi \in S'(c_1,\ldots, c_\alpha) \\ c_1 +\ldots + c_\alpha = n-1}} \sum_{\substack{\theta \in S(t, \alpha+1-t) \\ \theta(t) = \alpha+1 \\ 1 \leq t \leq \alpha+1}}  f_{\alpha+2-t} \circ \omega^\bullet \circ (m_{t} \otimes \Id) \circ \theta^\bullet \circ (I_{c_1} \otimes \cdots \otimes I_{c_\alpha}\otimes \Id) \circ (\pi^\bullet \otimes \Id)\\
	\end{align*}

\vspace{1em}
\noindent \textbf{Step 7.}
	Now, focusing on the $l$ terms (the second sum in Step 3), our goal is to apply the $L_\infty$-algebra relation for $I$.  The steps we follow here are essentially the same as in Lemma 2 (steps 3-12), and we direct the reader to them for details and for diagrams.  We start with
	$$
		\sum_{\substack{i+j=n+1 \\ 1 \leq i < n}} \sum_{\sigma(n)=n} \sum_{r=1}^{j-1} \sum_{\substack{\tau \in S'(i_1,\ldots, i_r) \\ i_1 +\ldots + i_r = j-1}} f_{r+1} \circ (I_{i_1} \otimes \cdots \otimes I_{i_r}\otimes \Id) \circ (\tau^\bullet \otimes \Id) \circ (l'_i \otimes \Id) \circ \sigma^\bullet
	$$
	Denote the block where $l_i'$ goes by $I_{i_l}$.  Break down the sum by $i_l = s$.
	$$
		\sum_{\substack{i+j=n+1 \\ 1 \leq i < n}} \sum_{\sigma(n)=n} \sum_{r=1}^{j-1} \sum_{s=1}^{j-1} \sum_{\substack{\tau \in S'(i_1,\ldots, i_r) \\ i_1 +\ldots + i_r = j-1 \\ i_l = s}} f_{r+1} \circ (I_{i_1} \otimes \cdots \otimes I_{i_r}\otimes \Id) \circ (\tau^\bullet \otimes \Id) \circ (l'_i \otimes \Id) \circ \sigma^\bullet
	$$
	Remove $j$ from the notation.
	$$
		\sum_{i=1}^{n-1} \sum_{\sigma(n)=n} \sum_{r=1}^{n-i} \sum_{s=1}^{n-i} \sum_{\substack{\tau \in S'(i_1,\ldots, i_r) \\ i_1 +\ldots + i_r = n-i \\ i_l = s}} f_{r+1} \circ (I_{i_1} \otimes \cdots \otimes I_{i_r}\otimes \Id) \circ (\tau^\bullet \otimes \Id) \circ (l'_i \otimes \Id) \circ \sigma^\bullet
	$$
	Reindex over the sum of $i$ and $s$.
	$$
		\sum_{k=1}^{n-1} \sum_{i+s=k+1} \sum_{\sigma(n)=n} \sum_{\substack{\tau \in S'(i_1,\ldots, i_r) \\ i_1 +\ldots + i_r = n-i \\ i_l = s}} f_{r+1} \circ (I_{i_1} \otimes \cdots \otimes I_{i_r}\otimes \Id) \circ (\tau^\bullet \otimes \Id) \circ (l'_i \otimes \Id) \circ \sigma^\bullet
	$$
	Use the map $\lambda^\bullet$ to permute $l_i'$ around $\tau$ and change $\tau$ to $\tau'$.
	\begin{equation*}
		\begin{split}
			\sum_{k=1}^{n-1} \sum_{i+s=k+1} & \sum_{\sigma(n)=n} \sum_{\substack{\tau' \in S(i_1,\ldots, i_l-1, \ldots, i_r) \\ i_1 +\ldots + i_r = n-i \\ i_l = s}} \\
			&f_{r+1} \circ (I_{i_1} \otimes \cdots \otimes I_{i_l} \otimes \cdots \otimes I_{i_r}\otimes \Id) \circ \lambda^\bullet \circ (\Id \otimes \tau'^\bullet \otimes \Id) \circ (l'_i \otimes \Id) \circ \sigma^\bullet
		\end{split}
	\end{equation*}
	Combine $\tau'$ and $\sigma$ into the permutation $\eta$.
	\begin{equation*}
		\begin{split}
			\sum_{k=1}^{n-1} \sum_{i+s=k+1} &\sum_{\rho\in S(i,i_l-1)} \sum_{\substack{\eta \in S(i+i_l-1, i_1,\ldots, \widehat{i_l}, \ldots, i_r,1) \\ i_1 +\ldots + i_r = n-i \\ i_l = s \\ \eta(n) = n}} \\
			&f_{r+1} \circ (I_{i_1} \otimes \cdots \otimes I_{i_l} \otimes \cdots \otimes I_{i_r}\otimes \Id) \circ \lambda^\bullet \circ \omega^\bullet \circ (l'_i \otimes \Id) \circ (\rho^\bullet \otimes \Id) \circ \eta^\bullet
		\end{split}
	\end{equation*}
	Use skew-symmetry of $f_{r+1}$ to swap the order of the $I$'s
	$$
		\sum_{k=1}^{n-1} \sum_{i+s=k+1} \sum_{\rho\in S(i,i_l-1)} \sum_{\substack{\eta \in S(i+i_l-1, i_1,\ldots, \widehat{i_l}, \ldots, i_r,1) \\ i_1 +\ldots + i_r = n-i \\ i_l = s \\ \eta(n) = n}} f_{r+1} \circ (I_{i_l} \otimes I_{i_1} \otimes \cdots \otimes I_{i_r}\otimes \Id) \circ (l'_i \otimes \Id) \circ (\rho^\bullet \otimes \Id) \circ \eta^\bullet
	$$
	Rewrite suggestively, noting that now the $I_{i_l}$ is omitted from $I_{i_1} \otimes \cdots \otimes I_{i_r}$.
	\begin{equation*}
		\begin{split}
			\sum_{k=1}^{n-1} \sum_{i+s=k+1} \sum_{\rho\in S(i,i_l-1)} \sum_{\substack{\eta \in S(k, i_1,\ldots, \widehat{i_l}, \ldots, i_r,1) \\ i_1 +\ldots + i_r = n-i \\ i_l = s \\ \eta(n) = n}} f_{r+1} \bigg (\left( I_{i_l} \circ (l_i' \otimes \Id) \circ \rho^\bullet \right) \otimes \left( I_{i_1} \otimes \cdots \otimes I_{i_r} \right) \otimes \Id \bigg) \circ \eta^\bullet
		\end{split}
	\end{equation*}
	Apply the morphism relations.  

	$$
		\sum_{k=1}^{n-1} \sum_{\substack{\gamma \in S'(t_1, \ldots, t_z) \\ t_1 + \ldots + t_z = k \\ 1\leq z \leq k}} \sum_{\substack{\eta \in S(k, i_1,\ldots, \widehat{i_l}, \ldots, i_r,1) \\ i_1 +\ldots + \widehat{i_l} + \ldots + i_r = n-1-k \\ \eta(n) = n}} f_{r+1} \bigg( \left( l_z \circ (I_{t_1} \otimes \cdots \otimes I_{t_z}) \circ \gamma^\bullet \right) \otimes \left( I_{i_1} \otimes \cdots \otimes I_{i_r} \right) \otimes \Id \bigg) \circ \eta^\bullet
	$$
	Combine $\gamma$ and $\eta$ into $\psi$.  
	\begin{equation*}
		\begin{split}
		\sum_{k=1}^{n-1} \sum_{\substack{\psi \in S(t_1, \ldots, t_z, i_1,\ldots, \widehat{i_l}, \ldots, i_r,1) \\ 1 \leq z \leq k \\ t_1 + \ldots + t_z = k \\ i_1 +\ldots + \widehat{i_l} + \ldots + i_r = n-1-k \\ \psi(n) = n}} f_{r+1} \bigg( (l_z\otimes \Id) \circ (I_{t_1} \otimes \cdots \otimes I_{t_z} \otimes I_{i_1} \otimes \cdots \otimes I_{i_r} \otimes \Id) \circ \psi^\bullet\bigg)
		\end{split}
	\end{equation*}
	This is equivalent to
	$$
		\sum_{\substack{\pi \in S'(c_1, \ldots, c_\alpha) \\ c_1 + \ldots + c_\alpha = n-1 \\ 1\leq \alpha \leq n-1}} \sum_{\substack{\theta \in S(t, \alpha+1-t) \\ \theta(\alpha+1) = \alpha+1 \\ 1\leq t < \alpha + 1}} f_{\alpha+2-t} \circ (l_t \otimes \Id) \circ \theta^\bullet \circ (I_{c_1} \otimes \cdots \otimes I_{c_\alpha} \otimes \Id) \circ (\pi^\bullet \otimes \Id)
	$$

\vspace{1em}
\noindent \textbf{Step 8.}
	In total, combining this with Step 6, we have the sum 

	\begin{align*}
		&\sum_{\substack{\pi \in S'(c_1,\ldots, c_\alpha) \\ c_1 +\ldots + c_\alpha = n-1}} \sum_{\substack{\theta \in S(t, \alpha+1-t) \\ \theta(t) = \alpha+1 \\ 1 \leq t \leq \alpha+1}}  f_{\alpha+2-t} \circ \omega^\bullet \circ (m_{t} \otimes \Id) \circ \theta^\bullet \circ (I_{c_1} \otimes \cdots \otimes I_{c_\alpha}\otimes \Id) \circ (\pi^\bullet \otimes \Id)\\
		+& \sum_{\substack{\pi \in S'(c_1, \ldots, c_\alpha) \\ c_1 + \ldots + c_\alpha = n-1}} \sum_{\substack{\theta \in S(t, \alpha+1-t) \\ \theta(\alpha+1) = \alpha+1 \\ 1\leq t < \alpha + 1}} f_{\alpha+2-t} \circ (l_t \otimes \Id) \circ \theta^\bullet \circ (I_{c_1} \otimes \cdots \otimes I_{c_\alpha} \otimes \Id) \circ (\pi^\bullet \otimes \Id)
	\end{align*}

\vspace{1em}
\noindent \textbf{Step 9.}
	Change notation; change $t$ to $i$ and $\alpha+2-t$ to $j$.

	\begin{align*}
		&\sum_{\substack{\pi \in S'(c_1,\ldots, c_\alpha) \\ c_1 +\ldots + c_\alpha = n-1}} \sum_{i+j=\alpha+2} \sum_{\substack{\theta \in S(i, \alpha+1-i) \\ \theta(i) = \alpha+1}}  f_{j} \circ \omega^\bullet \circ (m_{i} \otimes \Id) \circ \theta^\bullet \circ (I_{c_1} \otimes \cdots \otimes I_{c_\alpha}\otimes \Id) \circ (\pi^\bullet \otimes \Id)\\
		+& \sum_{\substack{\pi \in S'(c_1, \ldots, c_\alpha) \\ c_1 + \ldots + c_\alpha = n-1}} \sum_{\substack{i+j=\alpha+2 \\ i<\alpha+1}} \sum_{\substack{\theta \in S(i, \alpha+1-i) \\ \theta(\alpha+1) = \alpha+1}} f_{j} \circ (l_i \otimes \Id) \circ \theta^\bullet \circ (I_{c_1} \otimes \cdots \otimes I_{c_\alpha} \otimes \Id) \circ (\pi^\bullet \otimes \Id)
	\end{align*}

\vspace{1em}
\noindent \textbf{Step 10.}  Applying the module homomorphism relation for $f$, we obtain
\begin{align*}
		&\sum_{\substack{\pi \in S'(c_1,\ldots, c_\alpha) \\ c_1 +\ldots + c_\alpha = n-1}} \sum_{r+s=\alpha+2} \sum_{\substack{\rho \in S(\alpha-s, s)}}  n_{r} \circ (\Id \otimes f_s) \circ (\rho^\bullet \otimes \Id) \circ (I_{c_1} \otimes \cdots \otimes I_{c_\alpha}\otimes \Id) \circ (\pi^\bullet \otimes \Id)
	\end{align*}

\vspace{1em}
\noindent \textbf{Step 11.} It just remains to show that the sum above is equal to $$\sum_{r+s=n+1} \sum_{\tau} n'_r \circ (\Id \otimes (I^*f)_s) \circ (\tau^\bullet \otimes \Id) $$
Therefore, use the definition of $I^*f$.  Like usual, we start indexing at $x=0$ to allow for the $f_1$ case.

\begin{align*}
	\sum_{r+s=n+1} \sum_{\tau\in S(n-s,s)} \sum_{\substack{\phi \in S'(i_1,\ldots, i_x) \\ i_1 +\ldots + i_x = s-1 \\ 0 \leq x \leq s-1}} n'_r \circ \Big[\Id \otimes \Big(f_{x+1} \circ [(I_{i_1} \otimes \cdots \otimes I_{i_x} \circ\phi^\bullet) \otimes \Id] \Big) \Big]  \circ (\tau^\bullet \otimes \Id)
\end{align*}

\vspace{1em}
\noindent \textbf{Step 12.} Now use the definition of $n'$.  Allow for $y=0$ to deal with the $n_1$ case.

\begin{equation*}
	\begin{split}
		\sum_{r+s=n+1} & \sum_{\tau\in S(n-s,s)} \sum_{x=0}^{s-1} \sum_{\substack{\phi \in S'(i_1,\ldots, i_x) \\ i_1 +\ldots + i_x = s-1}} \sum_{y=0}^{n-s} \sum_{\substack{\gamma \in S'(j_1,\ldots, j_y) \\ j_1 +\ldots + j_y = n-s-1}}  n_{y+1} \circ (I_{j_1} \otimes \cdots \otimes I_{j_y} \otimes \Id) \circ \\ & (\gamma^\bullet \otimes \Id) \circ  \Big[\Id \otimes \Big(f_{x+1} \circ (I_{i_1} \otimes \cdots \otimes I_{i_x} \otimes \Id) \circ (\phi^\bullet\otimes \Id) \Big) \Big] \circ (\tau^\bullet \otimes \Id)
	\end{split}
\end{equation*}

\vspace{1em}
\noindent \textbf{Step 13.} Commute composition and tensor product to rewrite as 

\begin{equation*}
	\begin{split}
		&\sum_{r+s=n+1} \sum_{\tau\in S(n-s,s)} \sum_{x=0}^{s-1} \sum_{\substack{\phi \in S'(i_1,\ldots, i_x) \\ i_1 +\ldots + i_x = s-1}} \sum_{y=0}^{n-s} \sum_{\substack{\gamma \in S'(j_1,\ldots, j_y) \\ j_1 +\ldots + j_y = n-s-1}}  \\
		&\qquad n_{y+1}  \circ (\Id \otimes f_{x+1}) \circ (I_{j_1} \otimes \cdots \otimes I_{j_y} \otimes I_{i_1} \otimes \cdots \otimes I_{i_x} \otimes \Id) \circ (\gamma^\bullet \otimes \phi^\bullet \otimes \Id) \circ (\tau^\bullet \otimes \Id)
	\end{split}
\end{equation*}

\vspace{1em}
\noindent \textbf{Step 14.} Reindex over the diagonal of $\alpha=x+y$.  Observe that one of $x$ and $y$ can be 0, but not both at the same time.

\begin{equation*}
	\begin{split}
		&\sum_{r+s=n+1} \sum_{\tau\in S(n-s,s)} \sum_{\substack{1 \leq \alpha \leq n-1 \\ x+y=\alpha\\x,y\geq 0}} \sum_{\substack{\phi \in S'(i_1,\ldots, i_x) \\ i_1 +\ldots + i_x = s-1}} \sum_{\substack{\gamma \in S'(j_1,\ldots, j_y) \\ j_1 +\ldots + j_y = n-s-1}}  \\
		&\qquad n_{y+1}  \circ (\Id \otimes f_{x+1}) \circ (I_{j_1} \otimes \cdots \otimes I_{j_y} \otimes I_{i_1} \otimes \cdots \otimes I_{i_x} \otimes \Id) \circ (\gamma^\bullet \otimes \phi^\bullet \otimes \Id) \circ (\tau^\bullet \otimes \Id)
	\end{split}
\end{equation*}

\vspace{1em}
\noindent \textbf{Step 15.} Apply Lemma \ref{UL} to get

\begin{equation*}
	\begin{split}
		&\sum_{\substack{\pi \in S'(c_1,\ldots, c_\alpha) \\ c_1 +\ldots + c_\alpha = n-1 \\ 1 \leq \alpha \leq n-1}}  \sum_{\substack{\theta \in S(\alpha-s,s) \\  1\leq s \leq \alpha}} n_{\alpha+2-s}  \circ (\Id \otimes f_{s}) \circ \theta^\bullet \circ (I_{c_1} \otimes \cdots \otimes I_{c_\alpha}\otimes \Id) \circ (\pi^\bullet \otimes \Id)
	\end{split}
\end{equation*}

\vspace{1em}
\noindent \textbf{Step 16.} Rewriting this as 

\begin{equation*}
	\begin{split}
		&\sum_{\substack{\pi \in S'(c_1,\ldots, c_\alpha) \\ c_1 +\ldots + c_\alpha = n-1}} \sum_{r+s=\alpha+2} \sum_{\substack{\theta \in S(\alpha-s,s) \\  1\leq s \leq \alpha}} n_{r}  \circ (\Id \otimes f_{s}) \circ \theta^\bullet \circ (I_{c_1} \otimes \cdots \otimes I_{c_\alpha}\otimes \Id) \circ (\pi^\bullet \otimes \Id)
	\end{split}
\end{equation*}
shows that it is the same as the sum in Step 10, which completes the proof.

\end{proof}

\begin{theorem}[Functoriality]
	Suppose $I: (L',l') \to (L,l)$ is a map of $L_\infty$-algebras.  Then $I^*: L\Mod \to L'\Mod$ is a functor.
\end{theorem}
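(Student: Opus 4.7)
The plan is to verify the two functoriality axioms: preservation of identities and preservation of composition. Lemma \ref{Lem:Objects} and the subsequent Morphisms lemma already guarantee that $I^*$ sends $L$-modules to $L'$-modules and $L$-module homomorphisms to $L'$-module homomorphisms, so only these two axioms remain to be checked.

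For the identity axiom, I would unwind the definition of $I^*$ on morphisms applied to $\Id_M$:
$$(I^*\Id_M)_n = \sum_{r=1}^{n-1} \sum_{\substack{\tau \in S'(i_1,\ldots,i_r) \\ i_1 + \cdots + i_r = n-1}} (\Id_M)_{r+1} \circ (I_{i_1} \otimes \cdots \otimes I_{i_r} \otimes \Id) \circ (\tau^\bullet \otimes \Id),$$
together with the stipulation $(I^*f)_1 = f_1$. Since $(\Id_M)_k = 0$ for $k \geq 2$, every summand in the displayed formula vanishes, so $(I^*\Id_M)_n = 0$ for $n \geq 2$, while for $n=1$ one has $(I^*\Id_M)_1 = (\Id_M)_1 = \id_M$. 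This is exactly $\Id_{I^*M}$.

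The substantive content is preservation of composition: given $L$-module homomorphisms $A \xrightarrow{f} B \xrightarrow{g} C$, I must show $I^*(g \circ f) = (I^*g) \circ (I^*f)$. My plan is to expand the left-hand side by first applying the definition of $I^*$ on morphisms to $g \circ f$, and then substituting the composition formula for $(g \circ f)_{r+1}$. This produces a sum indexed by an outer $S'$-unshuffle partitioning the $n-1$ algebra inputs and routing the resulting blocks through $I$'s, together with a secondary splitting coming from the composition formula that decides which $I$-blocks feed into $f$ and which feed into $g$. For the right-hand side, I would apply the composition formula to $(I^*g) \circ (I^*f)$, then expand each of $(I^*g)_j$ and $(I^*f)_i$ via the definition of $I^*$, producing three nested partitions.

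The main obstacle will be matching these two multi-indexed sums term by term. Both describe the same combinatorial data, routing $n-1$ algebra inputs and one module input through compositions of $I_{\bullet}$, $f_{\bullet}$, and $g_{\bullet}$. The essential task is to rewrite both sums in a common canonical form by collapsing nested unshuffles into a single $S'$-unshuffle over a refined partition, exactly the bookkeeping technique used in Steps 5--6 and 11--12 of the Morphisms lemma and supplied by the unshuffle lemma \ref{UL} at the end of this section. After applying it on both sides, I expect every summand to take the canonical shape
$$g_b \circ \delta^\bullet \circ (f_a \otimes \Id) \circ \sigma^\bullet \circ (I_{c_1} \otimes \cdots \otimes I_{c_\alpha} \otimes \Id) \circ (\pi^\bullet \otimes \Id),$$
indexed by a single outer unshuffle $\pi \in S'(c_1,\ldots,c_\alpha)$ of a partition of $n-1$ together with an inner unshuffle $\sigma$ distributing the $I$-blocks between $f$ and $g$, at which point the two canonical forms will agree.
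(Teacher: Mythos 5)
Your proposal is correct and follows essentially the same route as the paper: the identity axiom argument is identical, and for composition the paper likewise reduces everything to the canonical form $g_q \circ \lambda'^{\bullet} \circ (f_p \otimes \Id) \circ \theta^{\bullet} \circ (I_{c_1} \otimes \cdots \otimes I_{c_\alpha} \otimes \Id) \circ (\tau^{\bullet} \otimes \Id)$ via the unshuffle lemma (Lemma~\ref{UL}). The only cosmetic difference is that the paper expands just the right-hand side and then recognizes the left-hand side in the canonical form, rather than expanding both sides and meeting in the middle.
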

\begin{proof}
	Suppose we have $L_\infty$-modules $M,N,$ and $Q$ over $L$ and $L_\infty$-module homomorphisms $M \xrightarrow{f} N \xrightarrow{g} Q$.  We have defined $I^*$ on objects and morphisms, so it remains to show that $I^*(\Id_M) = \Id_{I^*M}$ and that $I^*(g\circ f) = I^*g \circ I^*f$.  For the former, observe that $(I^*(\Id_M))_1=(\Id_M)_1$, and for $n\geq 2$,
	$$(I^*(\Id_M))_n = \sum_{r=1}^{n-1} \sum_{\substack{\tau \in S'(i_1,\ldots, i_r) \\ i_1 +\ldots + i_r = n-1}} (\Id_M)_{r+1} \circ (I_{i_1} \otimes \cdots \otimes I_{i_r} \otimes \Id) \circ (\tau^{\bullet} \otimes \Id)$$
	But $(\Id_M)_r=0$ for $r>1$, and so we conclude that $(I^*(\Id_M))_n=0$ for $n\geq 2$.  Hence $I^*(\Id_M) = \Id_{I^*M}$.  

	In remains to show that $I^*(g\circ f) = I^*g \circ I^*f$.  We will follow essentially the same procedure as in Lemma 2, steps 13-17.

	\vspace{1em}
	\noindent \textbf{Step 1.} We start with the right-hand side, and replace $[I^*g \circ I^*f]_n$ with its definition

	\begin{equation*}
		\begin{split}
			\sum_{i+j=n+1} \sum_{\sigma(i)=n} (I^*g)_j \circ \lambda^{\bullet} \circ ((I^*f)_i \otimes \Id) \circ \sigma^{\bullet}
		\end{split}
	\end{equation*}

	\vspace{1em}
	\noindent \textbf{Step 2.} Replace $I^*g$ and $I^*f$ with their definitions.  
	\begin{equation*}
		\begin{split}
			\sum_{i+j=n+1} & \sum_{\sigma(i)=n} \sum_{r=0}^{i-1} \sum_{\substack{\phi \in S'(i_1,\ldots, i_r) \\ i_1 +\ldots + i_r = i-1}} \sum_{s=0}^{j-1} \sum_{\substack{\psi \in S'(j_1,\ldots, j_s) \\ j_1 +\ldots + j_s = j-1}} \big[g_{s+1} \circ (I_{j_1} \otimes \cdots \otimes I_{j_s} \otimes \Id) \circ (\psi^{\bullet}\otimes \Id) \big]  \\ & \circ \lambda^{\bullet} \circ (\big[ f_{r+1} \circ (I_{i_1} \otimes \cdots \otimes I_{i_r} \otimes \Id) \circ (\phi^{\bullet} \otimes \Id) \big] \otimes \Id) \circ \sigma^{\bullet}
		\end{split}
	\end{equation*}
	Note that we include the cases $r=0$ and $s=0$ to include the cases $f_1$ and $g_1$, respectively.  In particular, $r=0$ will contribute a nonzero term only when $i=1$, and $s=0$ will only contribute a nonzero term when $j=1$.

	\vspace{1em}
	\noindent \textbf{Step 3.} Commute composition and tensor product to rewrite. 
	\begin{equation*}
		\begin{split}
			\sum_{i+j=n+1} & \sum_{\sigma(i)=n} \sum_{r=0}^{i-1} \sum_{\substack{\phi \in S'(i_1,\ldots, i_r) \\ i_1 +\ldots + i_r = i-1}} \sum_{s=0}^{j-1} \sum_{\substack{\psi \in S'(j_1,\ldots, j_s) \\ j_1 +\ldots + j_s = j-1}}  \\ 
			&g_{s+1} \circ \lambda'^{\bullet} \circ (f_{r+1} \otimes \Id) \circ 
			(I_{i_1}\otimes \cdots I_{i_r}  \otimes \Id \otimes I_{j_1}\otimes \cdots \otimes I_{j_s}) \circ (\phi^{\bullet} \otimes \Id \otimes \psi^{\bullet}) \circ \sigma^{\bullet}
		\end{split}
	\end{equation*}
	Here, $\lambda'^{\bullet}$ is the map that permutes the module element into the last input of $g_{s+1}$.

	\vspace{1em}
	\noindent \textbf{Step 4.} By Lemma 4, we obtain

	\begin{equation*}
		\begin{split}
			\sum_{\substack{\tau\in S'(c_1,\ldots,c_\alpha) \\ c_1+\ldots+c_\alpha=n-1 \\ 1\leq \alpha \leq n-1}} \sum_{\substack{\theta \in S(t+1,\alpha-t) \\ \theta(t+1)=\alpha+1 \\  0 \leq t \leq \alpha}}  g_{\alpha+1-t} \circ \lambda'^{\bullet} \circ (f_{t+1}\otimes \Id) \circ \theta^{\bullet} \circ (I_{c_1} \otimes \cdots \otimes I_{c_\alpha}\otimes \Id) \circ (\tau^{\bullet} \otimes \Id)
		\end{split}
	\end{equation*}

	\vspace{1em}
	\noindent \textbf{Step 5.} Change notation; let $p=t+1$ and $q=\alpha+1-t$.

	\begin{equation*}
		\begin{split}
			\sum_{\substack{\tau\in S'(c_1,\ldots,c_\alpha) \\ c_1+\ldots+c_\alpha=n-1 \\ 1\leq \alpha \leq n-1}} \sum_{\substack{\theta \in S(p,\alpha+1-p) \\ \theta(p)=\alpha+1 \\  1 \leq p \leq \alpha+1}}  g_{q} \circ \lambda'^{\bullet} \circ (f_{p}\otimes \Id) \circ \theta^{\bullet} \circ (I_{c_1} \otimes \cdots \otimes I_{c_\alpha}\otimes \Id) \circ (\tau^{\bullet} \otimes \Id)
		\end{split}
	\end{equation*}

	\vspace{1em}
	\noindent \textbf{Step 6.} By the definition of $g\circ f$, this is

	\begin{equation*}
		\begin{split}
			\sum_{\substack{\tau\in S'(c_1,\ldots,c_\alpha) \\ c_1+\ldots+c_\alpha=n-1 \\ 1\leq \alpha \leq n-1}}  (g\circ f)_{\alpha+1} \circ (I_{c_1} \otimes \cdots \otimes I_{c_\alpha}\otimes \Id) \circ (\tau^{\bullet} \otimes \Id)
		\end{split}
	\end{equation*}

	\vspace{1em}
	\noindent \textbf{Step 7.} By the definition of $I^*$, this is precisely $[I^*(g\circ f)]_n$, as desired.

\end{proof}

\begin{corollary}
	If $L$ and $L'$ are Lie algebras, and $\phi: L'\to L$ is a Lie algebra homomorphism, $\phi^*$ is the usual restriction of scalars for Lie algebra representations.
\end{corollary}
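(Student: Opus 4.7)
The plan is to specialize every piece of the $L_\infty$ data in Lemma~\ref{Lem:Objects} to its classical Lie-algebra counterpart and observe that the formulas collapse to the familiar ones.

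First I would fix the dictionary. A Lie algebra $L$ is the $L_\infty$-algebra concentrated in degree $0$ with $l_2$ equal to the bracket and $l_k=0$ for $k\neq 2$; a Lie algebra homomorphism $\phi:L'\to L$ is the $L_\infty$-morphism with $I_1=\phi$ and $I_k=0$ for $k\geq 2$; a representation $M$ of $L$ is the $L_\infty$-module with $k_2(x,m)=x\cdot m$ and $k_r=0$ for $r\neq 2$; and an intertwiner $f:M\to N$ of representations is the $L_\infty$-module homomorphism with $f_1$ the underlying linear map and $f_r=0$ for $r\geq 2$.

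Next I would substitute this data into the defining formula
$$k'_n = \sum_{r=1}^{n-1} \sum_{\substack{\tau\in S'(i_1,\ldots,i_r)\\ i_1+\cdots+i_r=n-1}} k_{r+1}\circ (I_{i_1}\otimes\cdots\otimes I_{i_r}\otimes\Id)\circ (\tau^\bullet\otimes\Id).$$
Since $I_{i_j}$ vanishes unless $i_j=1$, only the partition $i_1=\cdots=i_{n-1}=1$ contributes, and the set $S'(1,\ldots,1)$ consists of the identity permutation alone, so $k'_n = k_n\circ (\phi^{\otimes(n-1)}\otimes\Id)$. Because $k_n=0$ except when $n=2$, this gives $k'_2(x,m)=\phi(x)\cdot m$ and $k'_n=0$ otherwise, which is exactly the classical restriction of scalars on objects. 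The identical argument applied to the morphism formula for $(I^*f)_n$ forces $f_{r+1}$ with $r\geq 1$ into the sum; since all such $f_{r+1}$ vanish, I get $(I^*f)_n=0$ for $n\geq 2$, while $(I^*f)_1=f_1$ is the original intertwiner viewed between the restricted representations.

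The only point requiring a moment of care is verifying that $S'(1,\ldots,1)$ contains only the identity, which is immediate from the clause in the definition of $S'$ requiring the first elements of equal-sized blocks to appear in order. I do not expect any real obstacle here; the corollary reduces to clean bookkeeping against the definitions.
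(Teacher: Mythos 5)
Your proposal is correct and follows essentially the same route as the paper: specialize $I$ to have $I_1=\phi$, $I_k=0$ for $k\geq 2$, note the module has no higher operations, and observe that the formula of Lemma~\ref{Lem:Objects} collapses to $k'_2(x,m)=\phi(x)\cdot m$. You simply carry out explicitly the simplification the paper asserts in one line (including the correct observation that only the all-ones partition survives and that $S'(1,\ldots,1)=\{\Id\}$), which is a welcome amount of extra detail but not a different argument.
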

\begin{proof}
	Let $\rho: L \to \gl(M)$ be a Lie algebra representation.  For $x \in L'$ and $m\in M$, the usual restriction of scalars for Lie algebra representations is given by $x\cdot m := \phi(x)\cdot m$.  Indeed, $\rho': L' \to \gl(M)$ defined by $\rho'(y) = \rho(\phi(y))$ is a homomorphism of Lie algebras.  Now, regarding $\phi$ as an $L_\infty$-algebra map with $\phi_i=0$ for $i\neq 1$, because there are also no higher operations on $M$ as an $L_\infty$ $L$-module, the formulas given in Lemma \ref{Lem:Objects} for the induced operation simplify to give the usual restriction of scalars operation described above.
\end{proof}

We now prove the technical lemma that was used in the main results above.  In particular, this lemma gives two ways to interpret a particular composition of unshuffles.

\begin{lemma}\label{UL}
For a fixed $n$,

	$$\sum_{p=1}^n \sum_{\sigma(p)=n} \sum_{\substack{1 \leq \alpha \leq n-1 \\ r+s=\alpha \\ r,s \geq 0}} \sum_{\substack{\phi\in S'(i_1,\ldots,i_r) \\ i_1+\ldots+i_r=p-1}} \sum_{\substack{\psi\in S'(j_1,\ldots,j_s) \\ j_1+\ldots+j_s=n-p}} (I_{i_1}\otimes \cdots I_{i_r} \otimes \Id \otimes I_{j_1}\otimes \cdots \otimes I_{j_s}) \circ (\phi^{\bullet} \otimes \Id \otimes \psi^{\bullet}) \circ \sigma^{\bullet}$$
	is the same as
	$$
	\sum_{\substack{\tau\in S'(c_1,\ldots,c_\alpha) \\ c_1+\ldots+c_\alpha=n-1}} \sum_{\substack{\theta \in S(r+1,\alpha-r) \\ \theta(r+1)=\alpha+1 \\  0 \leq r \leq \alpha}}  \theta^{\bullet} \circ (I_{c_1} \otimes \cdots \otimes I_{c_\alpha}\otimes \Id) \circ (\tau^{\bullet} \otimes \Id).
	$$
 \end{lemma}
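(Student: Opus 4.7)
The plan is to establish an explicit bijection between the index sets of the two sums and verify that, under this bijection, corresponding summands are equal.

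Both sides, when applied to an input tuple $(x_1, \ldots, x_{n-1}, m)$, must output a tensor of the form
\[ I_{i_1}(A_1) \otimes \cdots \otimes I_{i_r}(A_r) \otimes m \otimes I_{j_1}(B_1) \otimes \cdots \otimes I_{j_s}(B_s), \]
where $(A_1, \ldots, A_r, B_1, \ldots, B_s)$ is an ordered partition of $\{1, \ldots, n-1\}$ into blocks, with $m$ inserted between the $A$-group and the $B$-group. The LHS encodes this data via $(\sigma, \phi, \psi)$: the unshuffle $\sigma$ splits $\{1, \ldots, n-1\}$ into an $A$-half (the indices sent to positions before $p$) and a $B$-half (the indices sent after), and then $\phi, \psi$ refine each half into $S'$-sorted sub-blocks. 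The RHS encodes the same data differently: $\tau$ sorts all $\alpha = r+s$ blocks together into a single $S'$-sorted sequence $(C_1, \ldots, C_\alpha)$, and $\theta$ then picks out which of those blocks lie to the left of $m$ and which lie to the right.

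The bijection is the natural sorting operation. Starting from $(\sigma, \phi, \psi)$, merge the two $S'$-sorted lists $(A_1, \ldots, A_r)$ and $(B_1, \ldots, B_s)$ by stable sort in $S'$-order (increasing block size, with ties broken by the smallest element in the block) to obtain $(C_1, \ldots, C_\alpha)$. This yields $\tau \in S'(c_1, \ldots, c_\alpha)$ as the unshuffle realizing the resulting partition of $\{1, \ldots, n-1\}$, and defines $\theta$ by declaring $\{\theta(1), \ldots, \theta(r)\}$ to be the positions occupied by the $A$-blocks in the sorted list (in increasing order), $\theta(r+1) = \alpha+1$, and $\{\theta(r+2), \ldots, \theta(\alpha+1)\}$ the positions of the $B$-blocks. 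Because $(A_1, \ldots, A_r)$ and $(B_1, \ldots, B_s)$ are each internally $S'$-sorted, stability of the merge ensures both position sets are increasing, so $\theta$ is a genuine $(r+1, \alpha-r)$-unshuffle. The inverse reads $r$ off from the location of $\alpha+1$ in $\theta$, splits $(C_1, \ldots, C_\alpha)$ into the two subsequences indexed by $\{\theta(1), \ldots, \theta(r)\}$ and $\{\theta(r+2), \ldots, \theta(\alpha+1)\}$, and recovers $\phi, \psi$ (and hence $\sigma$) by reading off these subsequences in their inherited order.

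It remains to verify equality of operators on $(x_1, \ldots, x_{n-1}, m)$. The LHS produces the tensor displayed above directly. The RHS first applies $\tau^\bullet \otimes \Id$ and then $I_{c_1} \otimes \cdots \otimes I_{c_\alpha} \otimes \Id$ to obtain $I_{c_1}(C_1) \otimes \cdots \otimes I_{c_\alpha}(C_\alpha) \otimes m$, and finally applies $\theta^\bullet$ to move $m$ into slot $r+1$ and interleave the $I$-outputs according to $\theta$. Since $A_k = C_{\theta(k)}$ and $B_k = C_{\theta(r+1+k)}$ by construction, the two tensors agree. The main subtlety is careful bookkeeping around the $S'$ tiebreak rule when several blocks share the same size, so that the sort and its inverse both land in $S'$; the degenerate cases $r=0$ or $s=0$ (where $\phi$ or $\psi$ is empty and $m$ sits at an extreme end) deserve a brief separate check but present no real obstacle.
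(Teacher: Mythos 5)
Your proof is correct and follows essentially the same route as the paper's: both arguments identify each summand on either side with the same combinatorial datum (an ordered list of $r$ blocks, the module element, then $s$ blocks, each group internally $S'$-sorted) and match them bijectively. Your stable-merge description of the correspondence is in fact more explicit than the paper's, which only asserts that "an explicit correspondence between the two sums can be written down using formulas."
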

 \begin{proof}

  \renewcommand{\figurename}{Figure}
	 \renewcommand{\thefigure}{4}
	\begin{figure}[H]
		\centering
		\scalebox{.96}{\input{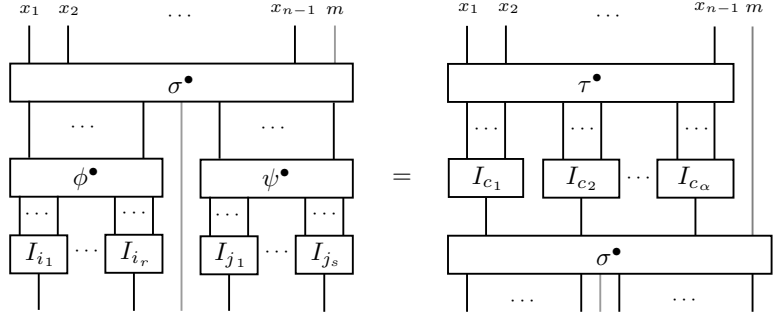}}
    	\caption{The left-hand side represents first unshuffling $n$ elements into two boxes (with the module element by itself) via $\sigma^\bullet$ and then unshuffling these boxes further into $r$ boxes and $s$ boxes via $\phi^\bullet$ and $\psi^\bullet$, respectively.  The right-hand side represents first unshuffling $n-1$ elements into $\alpha$ boxes via $\tau^\bullet$ and then unshuffling these $\alpha$ boxes via $\sigma^\bullet$.} 
    	\label{fig:TL}
	\end{figure}


 To see this, it is helpful to examine what the first sum does for a fixed $p$ and a fixed $\alpha$.  It unshuffles $n$ elements into a box of size $p-1$ and a box of size $n-p$, with the module element in between.  It then unshuffles the box of size $p-1$ further via $\phi$ into $r$ smaller boxes and the box of size $n-p$ further via $\psi$ into $s$ smaller boxes.  

 So, if we iterate through $\alpha=r+s$, this sum describes all possible ways of unshuffling $n$ elements into $r$ boxes (which contain a total of $p-1$ elements) and $s$ boxes (which contain a total number of $n-p$ elements), with the module element in between.  Then, iterating through all possible $p$ tells us that the sum describes all ways of unshuffling $n$ elements into $r+s$ boxes, with the module element in between.  Note that the $r$ boxes and the $s$ boxes have to be of increasing size when considered separately, but they need not be in order when considered all together (e.g. some of the $s$ boxes could be smaller than the last $r$ box).

On the other hand, the second sum unshuffles the $n-1$ algebra elements into $\alpha$ boxes first (here, the boxes are all of increasing size), and then it picks out $r$ of these via an $r$-unshuffle $\theta$ in $S_\alpha$.  Since there was a module element between the $r$ boxes and $s$ boxes in the first sum, we can view $\theta$ as an $(r+1)$-unshuffle in $S_\alpha$ where it puts the module element after the $r$ boxes.  So what we have done is the same as before: unshuffle $n$ elements into a group of $r$ boxes, a module element, and a group of $s=\alpha-r$ boxes, where the boxes are of increasing order when considered separately (but not necessarily when considered all together), see Figure \ref{fig:TL}.  An explicit correspondence between the two sums can be written down using formulas.
\end{proof}

\section{Appendix} 

\subsection{Composition}\

\begin{figure}[H]
\begin{minipage}[c]{1\textwidth}
	\centering
	\scalebox{1}{\input{figs/Module/Step-1.tex}}
\end{minipage}
\hfill
\begin{minipage}[c]{0\textwidth}
	\centering	
	\
\end{minipage}
\begin{minipage}[t]{1\textwidth}
	\centering
    \caption*{Step 1}
\end{minipage}
    \hfill
\begin{minipage}[t]{0\textwidth}
	\centering
    \
\end{minipage}
\end{figure}
\begin{figure}[H]
\begin{minipage}[c]{1\textwidth}
	\centering
	\scalebox{1}{\input{figs/Module/Step-2.tex}}
\end{minipage}
\hfill
\begin{minipage}[c]{0\textwidth}
	\centering	
	\
\end{minipage}
\begin{minipage}[t]{1\textwidth}
	\centering
    \caption*{Step 2}
\end{minipage}
    \hfill
\begin{minipage}[t]{0\textwidth}
	\centering
    \
\end{minipage}
\end{figure}
\
\vfill

\begin{figure}[H]
\begin{minipage}[c]{1\textwidth}
	\centering
	\scalebox{.95}{\input{figs/Module/Step-3.tex}}
\end{minipage}
\hfill
\begin{minipage}[c]{0\textwidth}
	\centering	
	\
\end{minipage}
\begin{minipage}[t]{1\textwidth}
	\centering
    \caption*{Step 3}
\end{minipage}
    \hfill
\begin{minipage}[t]{0\textwidth}
	\centering
    \
\end{minipage}
\end{figure}
\begin{figure}[H]
\begin{minipage}[c]{1\textwidth}
	\centering
	\scalebox{1}{\input{figs/Module/Step-5.tex}}
\end{minipage}
\hfill
\begin{minipage}[c]{0\textwidth}
	\centering	
	\
\end{minipage}
\begin{minipage}[t]{1\textwidth}
	\centering
    \caption*{Step 5}
\end{minipage}
    \hfill
\begin{minipage}[t]{0\textwidth}
	\centering
    \
\end{minipage}
\end{figure}
\
\vfill

\begin{figure}[H]
\begin{minipage}[c]{1\textwidth}
	\centering
	\scalebox{.95}{\input{figs/Module/Step-6.tex}}
\end{minipage}
\hfill
\begin{minipage}[c]{0\textwidth}
	\centering	
	\
\end{minipage}
\begin{minipage}[t]{1\textwidth}
	\centering
    \caption*{Step 6}
\end{minipage}
    \hfill
\begin{minipage}[t]{0\textwidth}
	\centering
    \
\end{minipage}
\end{figure}
\begin{figure}[H]
\begin{minipage}[c]{1\textwidth}
	\centering
	\scalebox{1}{\input{figs/Module/Step-8.tex}}
\end{minipage}
\hfill
\begin{minipage}[c]{0\textwidth}
	\centering	
	\
\end{minipage}
\begin{minipage}[t]{1\textwidth}
	\centering
    \caption*{Step 8}
\end{minipage}
    \hfill
\begin{minipage}[t]{0\textwidth}
	\centering
    \
\end{minipage}
\end{figure}
\
\vfill

\begin{figure}[H]
\begin{minipage}[c]{.45\textwidth}
	\centering
	\scalebox{1}{\input{figs/Module/Step-9.tex}}
\end{minipage}
\hfill
\begin{minipage}[c]{.45\textwidth}
	\centering	
	\scalebox{1}{\input{figs/Module/Step-10.tex}}
\end{minipage}
\begin{minipage}[t]{0.45\textwidth}
	\centering
    \caption*{Step 9}
\end{minipage}
    \hfill
\begin{minipage}[t]{0.45\textwidth}
	\centering
    \caption*{Step 10}
\end{minipage}
\end{figure}
\begin{figure}[H]
\begin{minipage}[c]{.45\textwidth}
	\centering
	\scalebox{1}{\input{figs/Module/Step-11.tex}}
\end{minipage}
\hfill
\begin{minipage}[c]{.45\textwidth}
	\centering	
	\scalebox{1}{\input{figs/Module/Step-13.tex}}
\end{minipage}
\begin{minipage}[t]{0.45\textwidth}
	\centering
    \caption*{Step 11}
\end{minipage}
    \hfill
\begin{minipage}[t]{0.45\textwidth}
	\centering
    \caption*{Step 13}
\end{minipage}
\end{figure}
\
\vfill

\newpage
\subsection{Objects} \

\begin{figure}[H]
\begin{minipage}[c]{1\textwidth}
	\centering
	\scalebox{1}{\input{figs/Functor/Objects/Step-1.tex}}
\end{minipage}
\hfill
\begin{minipage}[c]{0\textwidth}
	\centering	
	\
\end{minipage}
\begin{minipage}[t]{1\textwidth}
	\centering
    \caption*{Step 1}
\end{minipage}
    \hfill
\begin{minipage}[t]{0\textwidth}
	\
\end{minipage}
\end{figure}
\begin{figure}[H]
\begin{minipage}[c]{.45\textwidth}
	\centering
	\scalebox{1}{\input{figs/Functor/Objects/Step-2.tex}}
\end{minipage}
\hfill
\begin{minipage}[c]{.45\textwidth}
	\centering	
	\scalebox{1}{\input{figs/Functor/Objects/Step-4.tex}}
\end{minipage}
\begin{minipage}[t]{0.45\textwidth}
	\centering
    \caption*{Step 2}
\end{minipage}
    \hfill
\begin{minipage}[t]{0.45\textwidth}
	\centering
    \caption*{Step 4}
\end{minipage}
\end{figure}
\
\vfill

\begin{figure}[H]
\begin{minipage}[c]{.35\textwidth}
	\centering
	\scalebox{1}{\input{figs/Functor/Objects/Step-5.tex}}
\end{minipage}
\hfill
\begin{minipage}[c]{.55\textwidth}
	\centering	
	\scalebox{.8}{\input{figs/Functor/Objects/Step-6.tex}}
\end{minipage}
\begin{minipage}[t]{0.35\textwidth}
	\centering
    \caption*{Step 5}
\end{minipage}
    \hfill
\begin{minipage}[t]{0.55\textwidth}
	\centering
    \caption*{Step 6}
\end{minipage}
\end{figure}
\begin{figure}[H]
\begin{minipage}[c]{1\textwidth}
	\centering
	\scalebox{.85}{\input{figs/Functor/Objects/Step-7.tex}}
\end{minipage}
\hfill
\begin{minipage}[c]{0\textwidth}
	\centering	
	\
\end{minipage}
\begin{minipage}[t]{1\textwidth}
	\centering
    \caption*{Step 7}
\end{minipage}
    \hfill
\begin{minipage}[t]{0\textwidth}
	\centering
	\
\end{minipage}
\end{figure}
\
\vfill

\begin{figure}[H]
\begin{minipage}[c]{.45\textwidth}
	\centering
	\scalebox{.9}{\input{figs/Functor/Objects/Step-10.tex}}
\end{minipage}
\hfill
\begin{minipage}[c]{.45\textwidth}
	\centering	
	\scalebox{.9}{\input{figs/Functor/Objects/Step-11.tex}}
\end{minipage}
\begin{minipage}[t]{0.45\textwidth}
	\centering
    \caption*{Step 10}
\end{minipage}
    \hfill
\begin{minipage}[t]{0.45\textwidth}
	\centering
    \caption*{Step 11}
\end{minipage}
\end{figure}
\begin{figure}[H]
\begin{minipage}[c]{.45\textwidth}
	\centering
	\scalebox{1}{\input{figs/Functor/Objects/Step-12.tex}}
\end{minipage}
\hfill
\begin{minipage}[c]{.45\textwidth}
	\centering	
	\scalebox{1.1}{\input{figs/Functor/Objects/Step-13.tex}}
\end{minipage}
\begin{minipage}[t]{0.45\textwidth}
	\centering
    \caption*{Step 12}
\end{minipage}
    \hfill
\begin{minipage}[t]{0.45\textwidth}
	\centering
    \caption*{Step 13}
\end{minipage}
\end{figure}
\
\vfill

\begin{figure}[H]
\begin{minipage}[c]{.45\textwidth}
	\centering
	\scalebox{1}{\input{figs/Functor/Objects/Step-15.tex}}
\end{minipage}
\hfill
\begin{minipage}[c]{.45\textwidth}
	\centering	
	\scalebox{1.1}{\input{figs/Functor/Objects/Step-17.tex}}
\end{minipage}
\begin{minipage}[t]{0.45\textwidth}
	\centering
    \caption*{Step 15}
\end{minipage}
    \hfill
\begin{minipage}[t]{0.45\textwidth}
	\centering
    \caption*{Step 17}
\end{minipage}
\end{figure}
\begin{figure}[H]
\begin{minipage}[c]{1\textwidth}
	\centering
	\scalebox{1}{\input{figs/Functor/Objects/Step-19.tex}}
\end{minipage}
\hfill
\begin{minipage}[c]{0\textwidth}
	\centering	
	\
\end{minipage}
\begin{minipage}[t]{1\textwidth}
	\centering
    \caption*{Step 19}
\end{minipage}
    \hfill
\begin{minipage}[t]{0.45\textwidth}
	\centering
    \
\end{minipage}
\end{figure}
\
\vfill

\newpage

\newpage
\subsection{Morphisms} \

\begin{figure}[H]
\begin{minipage}[c]{1\textwidth}
	\centering
	\scalebox{1}{\input{figs/Functor/Morphisms/Step-1.tex}}
\end{minipage}
\hfill
\begin{minipage}[c]{0\textwidth}
	\centering	
	\
\end{minipage}
\begin{minipage}[t]{1\textwidth}
	\centering
    \caption*{Step 1}
\end{minipage}
    \hfill
\begin{minipage}[t]{0\textwidth}
	\centering
    \
\end{minipage}
\end{figure}
\begin{figure}[H]
\begin{minipage}[c]{1\textwidth}
	\centering
	\scalebox{1}{\input{figs/Functor/Morphisms/Step-3.tex}}
\end{minipage}
\hfill
\begin{minipage}[c]{0\textwidth}
	\centering	
	\
\end{minipage}
\begin{minipage}[t]{1\textwidth}
	\centering
    \caption*{Step 3}
\end{minipage}
    \hfill
\begin{minipage}[t]{0\textwidth}
	\centering
    \
\end{minipage}
\end{figure}
\
\vfill

\begin{figure}[H]
\begin{minipage}[c]{.45\textwidth}
	\centering
	\scalebox{.9}{\input{figs/Functor/Morphisms/Step-4.tex}}
\end{minipage}
\hfill
\begin{minipage}[c]{.45\textwidth}
	\centering	
	\scalebox{.9}{\input{figs/Functor/Morphisms/Step-5.tex}}
\end{minipage}
\begin{minipage}[t]{0.45\textwidth}
	\centering
    \caption*{Step 4}
\end{minipage}
    \hfill
\begin{minipage}[t]{0.45\textwidth}
	\centering
    \caption*{Step 5}
\end{minipage}
\end{figure}
\begin{figure}[H]
\begin{minipage}[c]{.45\textwidth}
	\centering
	\scalebox{1}{\input{figs/Functor/Morphisms/Step-6.tex}}
\end{minipage}
\hfill
\begin{minipage}[c]{.45\textwidth}
	\centering	
	\scalebox{1}{\input{figs/Functor/Morphisms/Step-7.tex}}
\end{minipage}
\begin{minipage}[t]{0.45\textwidth}
	\centering
    \caption*{Step 6}
\end{minipage}
    \hfill
\begin{minipage}[t]{0.45\textwidth}
	\centering
    \caption*{Step 7}
\end{minipage}
\end{figure}
\
\vfill

\begin{figure}[H]
\begin{minipage}[c]{1\textwidth}
	\centering
	\scalebox{1}{\input{figs/Functor/Morphisms/Step-9.tex}}
\end{minipage}
\hfill
\begin{minipage}[c]{0\textwidth}
	\centering	
	\
\end{minipage}
\begin{minipage}[t]{1\textwidth}
	\centering
    \caption*{Step 9}
\end{minipage}
    \hfill
\begin{minipage}[t]{0\textwidth}
	\centering
    \
\end{minipage}
\end{figure}
\begin{figure}[H]
\begin{minipage}[c]{.45\textwidth}
	\centering
	\scalebox{1}{\input{figs/Functor/Morphisms/Step-10.tex}}
\end{minipage}
\hfill
\begin{minipage}[c]{.45\textwidth}
	\centering	
	\scalebox{1.1}{\input{figs/Functor/Morphisms/Step-11.tex}}
\end{minipage}
\begin{minipage}[t]{0.45\textwidth}
	\centering
    \caption*{Step 10}
\end{minipage}
    \hfill
\begin{minipage}[t]{0.45\textwidth}
	\centering
    \caption*{Step 11}
\end{minipage}
\end{figure}
\
\vfill

\begin{figure}[H]
\begin{minipage}[c]{.45\textwidth}
	\centering
	\scalebox{.9}{\input{figs/Functor/Morphisms/Step-12.tex}}
\end{minipage}
\hfill
\begin{minipage}[c]{.45\textwidth}
	\centering	
	\scalebox{.9}{\input{figs/Functor/Morphisms/Step-13.tex}}
\end{minipage}
\begin{minipage}[t]{0.45\textwidth}
	\centering
    \caption*{Step 12}
\end{minipage}
    \hfill
\begin{minipage}[t]{0.45\textwidth}
	\centering
    \caption*{Step 13}
\end{minipage}
\end{figure}
\begin{figure}[H]
\begin{minipage}[c]{1\textwidth}
	\centering
	\scalebox{1}{\input{figs/Functor/Morphisms/Step-15.tex}}
\end{minipage}
\hfill
\begin{minipage}[c]{0\textwidth}
	\centering
	\
\end{minipage}
\begin{minipage}[t]{1\textwidth}
	\centering
    \caption*{Step 15}
\end{minipage}
    \hfill
\begin{minipage}[t]{0.45\textwidth}
	\centering
    \
\end{minipage}
\end{figure}
\
\vfill

\newpage
\subsection{Functoriality} \

\begin{figure}[H]
\begin{minipage}[c]{.45\textwidth}
	\centering
	\scalebox{1}{\input{figs/Functor/Functoriality/Step-1.tex}}
\end{minipage}
\hfill
\begin{minipage}[c]{.45\textwidth}
	\centering	
	\scalebox{.9}{\input{figs/Functor/Functoriality/Step-2.tex}}
\end{minipage}
\begin{minipage}[t]{0.45\textwidth}
    \caption*{Step 1}
\end{minipage}
    \hfill
\begin{minipage}[t]{0.45\textwidth}
    \caption*{Step 2}
\end{minipage}
\end{figure}
\begin{figure}[H]
\begin{minipage}[c]{.45\textwidth}
	\centering
	\scalebox{.95}{\input{figs/Functor/Functoriality/Step-3.tex}}
\end{minipage}
\hfill
\begin{minipage}[c]{.45\textwidth}
	\centering	
	\scalebox{.95}{\input{figs/Functor/Functoriality/Step-5.tex}}
\end{minipage}
\begin{minipage}[t]{0.45\textwidth}
    \caption*{Step 3}
\end{minipage}
    \hfill
\begin{minipage}[t]{0.45\textwidth}
    \caption*{Step 5}
\end{minipage}
\end{figure}
\
\vfill

\newpage

\begin{figure}[H]
\begin{minipage}[c]{.45\textwidth}
	\centering
	\scalebox{1}{\input{figs/Functor/Functoriality/Step-6.tex}}
\end{minipage}
\hfill
\begin{minipage}[c]{.45\textwidth}
	\centering	
	\
\end{minipage}
\begin{minipage}[t]{0.45\textwidth}
    \caption*{Step 6}
\end{minipage}
    \hfill
\begin{minipage}[t]{0.45\textwidth}
\
\end{minipage}
\end{figure}

\bibliographystyle{plain} 
\bibliography{infinity} 

\end{document}